 \newtheorem{theorem}{Theorem}[section]
 \newtheorem{lemma}[theorem]{Lemma}
 \theoremstyle{definition}
 \newtheorem{definition}[theorem]{Definition}
 \theoremstyle{remark}
 \newtheorem{rem}[theorem]{Remark}
 \newtheorem*{example}{Example}
 \numberwithin{equation}{section}
\begin{document}

%
%
%
%
%
%
%
%
%

\title[A Note on Quasi bi-slant submanifolds of cosymplectic manifolds]
 {A Note on Quasi bi-slant submanifolds of cosymplectic manifolds}

\author[M. A. Akyol]{Mehmet Akif Akyol$^1$}
\address{$^1$Bingol University\\ Faculty of Arts and Sciences,\\ Department of Mathematics\\ 12000, Bing\"{o}l, Turkey}
\email{mehmetakifakyol@bingol.edu.tr}
\author[S. Beyendi]{Selahattin Beyendi$^2$}
\address{$^2$\.{I}nonu University\\ Faculty of Education \\ 44000, Malatya, Turkey}
\email{selahattin.beyendi@inonu.edu.tr}




\subjclass{Primary 53C15, 53B20}

\keywords{Slant submanifold, bi-slant submanifold, quasi bi-slant submanifold, cosymplectic manifold}

\date{January 1, 2004}

\begin{abstract}
The main purpose of the present paper is to define and study the notion of quasi bi-slant submanifolds of almost contact metric manifolds. We mainly concerned with quasi bi-slant submanifolds of cosymplectic manifolds as a generalization of slant, semi-slant, hemi-slant, bi-slant and quasi hemi-slant submanifolds. First, we give non-trivial examples in order to demostrate the method presented in this paper is effective and investigate the geometry of distributions. Moreover, We study these types of submanifolds with parallel canonical structures.
\end{abstract}

\maketitle
\section{Introduction}

Study of submanifolds theory has shown an increasing development in image processing, computer design, economic modeling as well as in mathematical physics and in mechanics. In this manner, B-Y. Chen \cite{C2} initiated the notion of slant submanifold as a generalization of both holomorphic (invariant) and totally real submanifold (anti-invariant) of an almost Hermitian manifold.  Inspried by B-Y. Chen's paper, many geometers have studied this notion in the different kind of structures: (see \cite{S1}, \cite{S2}). Many consequent results on slant submanifolds are collected in his book \cite{C1}. 
After this notion, as a generalization of semi-slant submanifold which was defined by N. Papaghiuc \cite{P} (see also \cite{CCFF1}). A. Carriazo \cite{C} and \cite{C-1} introduced the notion of bi-slant submanifold under the name anti-slant submanifold. However, B. \c{S}ahin called these submanifolds hemi-slant submanifolds in \cite{S}. (See also \cite{DA} and \cite{DA1}, \cite{PVK}, \cite{TO}). 

Furthermore, the submanifolds of a cosymplectic manifold have been studied by many geometers: See \cite{GHA}, \cite{Khan}, \cite{Kim},  \cite{LLS}, \cite{Lotta}, \cite{Ludden},  \cite{UOK}.
Taking into account of the above studies, we are motivated to fill a gap in the literature by giving the notion of quasi bi-slant submanifolds in which the tangent bundle consist of one invariant and two slant distributions and the Reeb vector field. In this paper, as a generalization of slant, semi-slant, hemi-slant, bi-slant and quasi hemi-slant submanifolds, we introduce quasi bi-slant submanifolds and investigate the geometry of distributions in detail.

The paper is organized as follows: In section 2, we recall basic formulas and definitions for a cosymplectic
manifold and their submanifolds. In section 3, we introduce the notion of quasi bi-slant submanifolds, giving an non-tirivial example and obtain some basic results for the next sections. In section 4, we give some necessary and sufficient conditions for the geometry of distributions. Finally, we study these types of submanifolds with parallel canonical structures.

\section{Preliminaries}

In this section, we give the definition of cosymplectic manifold and some background on submanifolds theory.

A $(2m+1)$-dimensional $C^\infty$-manifold $M$ said to have an almost contact structure if there exist
on $M$ a tensor field $\varphi$ of type (1,1), a vector field  $\xi$ and 1-form  $\eta$ satisfying:
\begin{equation}\label{phikare}
\varphi^2=-I+\eta\otimes\xi, \ \ \varphi\xi=0,\ \ \eta o\varphi=0,\ \ \eta(\xi)=1. 
\end{equation}
There always exists a Riemannian metric $g$ on an almost contact manifold $M$ satisfying the following
conditions
\begin{equation}\label{gphixphiy}
g(\varphi X,\varphi Y)=g(X,Y)-\eta(X)\eta(Y),\ \ \ \eta(X)=g(X,\xi) 
\end{equation}
where $X,Y$ are vector fields on $M.$

An almost contact structure $(\varphi,\xi,\eta)$ is said to be normal if the almost complex structure
$J$ on the product manifold $M \times \mathbb{R}$ is given by
$$J(X,f\frac{d}{dt})=(\varphi X-f\xi,\eta(X)\frac{d}{dt}),$$
where $f$ is a $C^\infty$-function on $M\times \mathbb{R}$ has no torsion i.e., $J$ is integrable. The condition
for normality in terms of $\varphi,\xi$ and $\eta$ is $[\varphi,\varphi]+2d\eta\otimes\xi=0$ on $M,$ where
$[\varphi,\varphi]$ is the Nijenhuis tensor of $\varphi.$ Finally, the fundamental two-form $\Phi$ is
defined $\Phi(X,\varphi Y)=g(X,\varphi Y).$

An almost contact metric structure $(\varphi,\xi,\eta,g)$ is said to be cosymplectic, if it is normal and both
$\Phi$ and $\eta$ are closed (\cite{B}, \cite{B1}, \cite{Ludden}), and the structure equation of a cosymplectic manifold is given by
\begin{equation}\label{nablaxvarphiy}
(\nabla_{X}\varphi)Y=0 
\end{equation}
for any $X,Y$ tangent to $M,$ where $\nabla$ denotes the Riemannian connection of the metric $g$ on $M.$ Moreover,
for cosymplectic manifold
\begin{equation}\label{nablaxxi}
\nabla_X\xi=0.
\end{equation}
\begin{example}(\cite{OL})\label{exm}
 $\mathbb{R}^{2n+1}$ with Cartesian coordinates $(x_i,y_i,z)(i=1,...,n)$ and its usual contact form $$\eta=dz \ \ \textrm{and} \ \ \xi=\frac{\partial}{\partial z},$$
	here $\xi$ is the characteristic vector field and its Riemannian metric $g$ and tensor
	field $\varphi$ are given by
	\begin{equation*}
	g=\sum_{i=1}^{n}((dx_i)^2+(dy_i)^2)+(dz)^2,\ \ \ \ \varphi=\left(
	\begin{array}{ccc}
	0 & \delta_{ij} & 0 \\
	-\delta_{ij} & 0 & 0 \\
	0 & 0 & 0 \\
	\end{array}
	\right),\ \ i=1,...,n.
	\end{equation*}
	This gives a cosymplectic manifold on $\mathbb{R}^{2n+1}.$ The vector fields
	$e_i=\frac{\partial}{\partial y_i},$ $e_{n+i}=\frac{\partial}{\partial x_i},$ $\xi$ form a $\varphi$-basis for the cosymplectic structure.
	On the other hand, it can be shown that $\mathbb{R}^{2n+1}(\varphi,\xi,\eta,g)$ is a cosymplectic manifold.
\end{example}

Let $M$ be a Riemannian manifold isometrically immersed in $\bar{M}$ and induced Riemannian metric on $M$ is denoted by the same symbol $g$ throughout this paper. Let $\mathcal{A}$ and $h$ denote the shape operator and second fundamental form, respectively, of immersion of $M$ into $\bar{M}$. The Gauss and Weingarten formulas of $M$ into $\bar{M}$ are given by \cite{C2}
\begin{equation}\label{gauss}
\bar{\nabla}_XY=\nabla_XY+h(X,Y)
\end{equation}
and
\begin{equation}\label{weingarten}
\bar{\nabla}_XV=-\mathcal{A}_VX+\nabla^{\perp}_XV,
\end{equation}
for any vector fields $X, Y\in \Gamma(TM)$ and $V\in\Gamma(T^{\perp}M)$, where $\nabla$ is the induced connection on $M$ and $\nabla^{\perp}$ represents the connection on the normal bundle $T^{\perp}M$ of $M$ and $A_V$ is the shape operator of $M$ with respect to normal vector $V\in\Gamma(T^{\perp}M)$. Moreover, $\mathcal{A}_V$ and $h$ are related by
\begin{equation}\label{ghxyv}
g(h(X,Y),V)=g(\mathcal{A}_VX,Y)
\end{equation}
for any vector fields $X, Y \in\Gamma(TM)$ and $V\in\Gamma(T^{\perp}M)$.


If $h(X,Y)=0$ for all $X,Y\in\Gamma(TM)$, then $M$ is said to be totally geodesic.

\section{Quasi bi-slant submanifolds of cosmyplectic manifolds}

In this section, we define the concept of quasi bi-slant submanifolds of cosymplectic manifolds, giving a non-trivial exmaple and obtain some related results for later use. 

\begin{definition}\label{def1}
A submanifold $M$ of cosymplectic manifolds $(\bar{M},\varphi,\xi,\eta,\bar{g})$
is called quasi bi-slant if there exists four orthogonal distributions
$\mathcal{D},$ $\mathcal{D}_{1}$ and $\mathcal{D}_{2}$ of $M,$ at the point $p\in M$  such that
\begin{itemize}
	\item[(i)] $TM=\mathcal{D}\oplus \mathcal{D}_{1}\oplus \mathcal{D}_{2}\oplus<\xi>$
	\item[(ii)] The distribution $\mathcal{D}$ is invariant, i.e. $\varphi \mathcal{D}=\mathcal{D}.$  
	\item[(iii)] $\varphi \mathcal{D}_{1}\perp \mathcal{D}_{2}$ and $\varphi \mathcal{D}_{2}\perp \mathcal{D}_{1};$
	\item [(iv)] The distributions $\mathcal{D}_1, \mathcal{D}_2$ are slant with slant angle $\theta_1$, $\theta_2,$ respectively.
\end{itemize}
\end{definition}

Taking the dimension of distributions $\mathcal{D},$ $\mathcal{D}_1$ and $\mathcal{D}_2$ are  $m_1, m_2$ and $m_3$, respectively. One can easily see the following cases:

\begin{itemize}
	\item If $m_1\neq0$ and  $m_2=m_3=0,$ then $M$ is a invariant submanifold. 
	\item If $m_1=m_2=0$ and $\theta_2=\frac{\pi}{2}$ then $M$ is an anti-invariant submanifold. 
	\item If $m_1=0, m_2\neq m_3\neq0,$ $\theta_1=0$ and $\theta_2=\frac{\pi}{2}$ then $M$ is a semi-invariant submanifold.
	\item If $m_1=m_2=0$ and $0<\theta_2<\frac{\pi}{2}$ then $M$ is a slant submanifold. 
	\item If $m_1=0, m_2\neq m_3\neq0,$ $\theta_1=0$ and $0<\theta_2<\frac{\pi}{2}$ then $M$ is a semi-slant submanifold.
	\item If $m_1=0, m_2\neq m_3\neq0,$ $\theta_1=\frac{\pi}{2}$ and $0<\theta_2<\frac{\pi}{2}$ then $M$ is a hemi-slant submanifold.
	\item If $m_1=0, m_2\neq m_3\neq0,$ and $\theta_1$ and  $\theta_2$ are different from either $0$  and $\frac{\pi}{2},$ then $M$ is a bi-slant submanifold.\\ 
\end{itemize}



If $m_1\neq m_2 \neq m_3\neq0$ and $\theta_1, \theta_2\neq0, \frac{\pi}{2},$ then $M$ is called a \textit{\bf{proper quasi bi-slant submanifold}}. 

\begin{rem}
	In this paper, we assume that $M$ is \textit{proper} quasi bi-slant submanifold of a cosymplectic manifold $\bar{M}.$
\end{rem}

Now, we present an example of proper quasi bi-slant submanifold in $\mathbb{R}^{11}$.

\begin{example}
	We will use the canonical contact structure $\varphi$ defined by $$\varphi(x_1,y_1,...,x_n,y_n,z)=(y_1,-x_1,...,y_n,-x_n,0).$$ Thus we have $\varphi(\partial x_i)=\partial y_i,
	$ $\varphi(\partial y_j)=-\partial x_j$ and $\varphi (\partial z)=0,\ \ 1\leq i,j\leq 5$ where $\partial x_i=\frac{\partial}{\partial x_i}$. 
	For any pair of real numbers {$\theta_1,\theta_2$} satisfying $0<\theta_1,\theta_2<\frac{\pi}{2},$ let us consider submanifold $M_{\theta_1, \theta_2}$ of $\mathbb{R}^{11}$ defined by 
	
	$\pi_{\theta_1, \theta_2}(u,s,w,k,t,r,z)=(u,s\cos\theta_1,0,s\sin\theta_1, \omega, k\cos\theta_2,0,k\sin\theta_2,t,r,z).$ If we take $$e_1=\partial x_1, \ \ e_2=\cos\theta_1\partial y_1+\sin\theta_1\partial y_2,$$ $$e_3=\partial x_3,\ \ e_4=\cos\theta_2\partial y_3+\sin\theta_2\partial y_4,$$ $$e_5=\partial x_5, \ \ e_6=\partial y_5, \ \ e_7=\xi=\partial z$$ then the restriction of ${e_1,...,e_7}$ to $M$ forms an orthonormal frame of the tangent bundle $TM.$ Obviously, we get
	$$\varphi e_1=\partial y_1, \ \varphi e_2=-\cos\theta_1\partial x_1-\sin\theta_1\partial x_2,\ \varphi e_3=\partial y_3$$
	$$\varphi e_4=-\cos\theta_2\partial x_3-\sin\theta_2\partial x_4, \ \varphi e_5=\partial y_5, \ \varphi e_6=-\partial x_5.$$ Let us put $\mathcal{D}_1=Span\{e_1,e_2\},$ $\mathcal{D}_2=Span\{e_3,e_4\},$ and $\mathcal{D}=Span\{e_5,e_6\}.$ Then obviously $\mathcal{D}_1,$ $\mathcal{D}_2$ and $\mathcal{D},$ satisfy the definition of quasi bi-slant submanifold $M_{\theta_1, \theta_2}$ defined by $\pi_{\theta_1, \theta_2}$ is a proper quasi bi-slant submanifold of $\mathbb{R}^{11}$ with {$\theta_1,\theta_2$} as its bi-slant angles.
\end{example}

Let $M$ be a quasi bi-slant submanifold of a cosymplectic manifold $\bar{M}.$ Then, for any $X\in \Gamma(TM),$ we have
\begin{align}\label{x}
X=\mathcal{P}X+\mathcal{Q}X+\mathcal{R}X+\eta(X)\xi
\end{align}
where $\mathcal{P}, \mathcal{Q}$ and $\mathcal{R}$ denotes the projections on the distributions $\mathcal{D}, \mathcal{D}_1$ and $\mathcal{D}_2,$ recpectively. 
\begin{equation}\label{vx}
\varphi X=\mathcal{T}X+\mathcal{F}X,
\end{equation}
where $\mathcal{T}X$ and $\mathcal{F}X$ are tangential and normal components on $M.$ Making now use of \eqref{x} and \eqref{vx}, we get immediately
\begin{equation}\label{vx1}
\varphi X=\mathcal{T}\mathcal{P}X+\mathcal{T}\mathcal{Q}X+\mathcal{F}\mathcal{Q}X
+\mathcal{T}\mathcal{R}X+\mathcal{F}\mathcal{R}X,
\end{equation}
here since $\varphi\mathcal{D}=\mathcal{D},$ we have $\mathcal{F}\mathcal{P}X=0.$ Thus we get
\begin{equation}\label{vtm}
\varphi(TM)=\mathcal{D}\oplus\mathcal{T}\mathcal{D}_1\oplus\mathcal{T}\mathcal{D}_2
\end{equation}
and
\begin{equation}\label{tdikm}
T^\perp M=\mathcal{F}\mathcal{D}_1\oplus\mathcal{F}\mathcal{D}_2\oplus\mu
\end{equation}
where $\mu$ is the orthogonal complement of $\mathcal{F}\mathcal{D}_1\oplus\mathcal{F}\mathcal{D}_2$ in 
$T^\perp M$ and it is invariant with recpect to $\varphi.$
Also, for any $Z\in T^\perp M,$ we have
\begin{equation}\label{vz}
\varphi Z=\mathcal{B}Z+\mathcal{C}Z,
\end{equation}
where $\mathcal{B}Z\in \Gamma(TM)$ and $\mathcal{C}Z\in \Gamma(T^\perp M).$

Taking into account of  the condition (iii) in Definition \eqref{def1}, \eqref{vx} and \eqref{vz}, we obtain the followings:
\begin{lemma}
Let $M$ be a quasi bi-slant submanifold of a cosymplectic manifold $\bar{M}.$ Then, we have
\begin{equation*}
\textbf{(a)}\ T\mathcal{D}_{1}\subset \mathcal{D}_{1},\quad
\textbf{(b)}\ T\mathcal{D}_{2}\subset \mathcal{D}_{2},\quad
\textbf{(c)}\ \mathcal{B}\mathcal{F}\mathcal{D}_{1}=\mathcal{D}_{1},\quad
\textbf{(d)}\ \mathcal{B}\mathcal{F}\mathcal{D}_{2}=\mathcal{D}_{2}.
\end{equation*}
\end{lemma}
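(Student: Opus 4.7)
The plan is to handle (a)/(b) by a direct orthogonality argument, and then bootstrap (c)/(d) from (a)/(b) using the identity $\varphi^{2}=-I+\eta\otimes\xi$.

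First, I would prove (a) (and (b) by a symmetric argument). Take $X\in\Gamma(\mathcal{D}_1)$ and write $\varphi X=\mathcal{T}X+\mathcal{F}X$ as in \eqref{vx}. To conclude $\mathcal{T}X\in\mathcal{D}_1$, it suffices to show that $\mathcal{T}X$ is orthogonal to $\mathcal{D}$, $\mathcal{D}_2$, and $\xi$. The main tool is the skew-symmetry $g(\varphi U,V)=-g(U,\varphi V)$, which I would derive quickly from \eqref{gphixphiy} by replacing $V$ with $-\varphi^{2}V+\eta(V)\xi$ and using $\eta\circ\varphi=0$. Given this identity, for $Y\in\Gamma(\mathcal{D}_2)$ we have $g(\mathcal{T}X,Y)=g(\varphi X,Y)=0$ by condition~(iii); for $Z\in\Gamma(\mathcal{D})$, $g(\mathcal{T}X,Z)=-g(X,\varphi Z)=0$ since $\varphi Z\in\mathcal{D}$ is orthogonal to $\mathcal{D}_1$; and $g(\mathcal{T}X,\xi)=\eta(\varphi X)=0$. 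The same argument with the roles of $\mathcal{D}_1$ and $\mathcal{D}_2$ exchanged proves (b).

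For (c) (and analogously (d)), I would apply $\varphi$ twice. For $X\in\Gamma(\mathcal{D}_1)$, we have $\eta(X)=0$, so $\varphi^{2}X=-X$ by \eqref{phikare}. Expanding $\varphi^{2}X=\varphi(\mathcal{T}X)+\varphi(\mathcal{F}X)$ using \eqref{vx} and \eqref{vz} and separating the tangential and normal parts gives
\begin{equation*}
\mathcal{T}^{2}X+\mathcal{B}\mathcal{F}X=-X,\qquad \mathcal{F}\mathcal{T}X+\mathcal{C}\mathcal{F}X=0.
\end{equation*}
The tangential identity yields $\mathcal{B}\mathcal{F}X=-X-\mathcal{T}^{2}X$. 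By (a), $\mathcal{T}X\in\mathcal{D}_1$ and hence $\mathcal{T}^{2}X\in\mathcal{D}_1$, so $\mathcal{B}\mathcal{F}X\in\mathcal{D}_1$, giving $\mathcal{B}\mathcal{F}\mathcal{D}_1\subseteq\mathcal{D}_1$.

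The main obstacle is the reverse inclusion $\mathcal{D}_1\subseteq\mathcal{B}\mathcal{F}\mathcal{D}_1$, which is where the slant hypothesis is used essentially. I would invoke the standard characterization of a slant distribution: $\mathcal{T}^{2}=-\cos^{2}\theta_1\,I$ on $\mathcal{D}_1$. Combined with $\mathcal{B}\mathcal{F}=-I-\mathcal{T}^{2}$, this gives $\mathcal{B}\mathcal{F}|_{\mathcal{D}_1}=-\sin^{2}\theta_1\,I$, which is an isomorphism of $\mathcal{D}_1$ since $\theta_1\neq 0$ in the proper quasi bi-slant setting. Hence $\mathcal{B}\mathcal{F}\mathcal{D}_1=\mathcal{D}_1$; the same computation with $\theta_2$ in place of $\theta_1$ yields (d).
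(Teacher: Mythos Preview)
Your argument is correct and follows the route the paper intends: the paper's own proof is only the one-line remark that the statement follows from condition~(iii) of Definition~\ref{def1} together with the decompositions \eqref{vx} and \eqref{vz}. Your orthogonality computations for (a)/(b) and the identity $\mathcal{B}\mathcal{F}=-\sin^{2}\theta_{i}\,I$ on $\mathcal{D}_{i}$ for (c)/(d) are exactly the content the paper records (again without proof) in the subsequent Lemma~\ref{general}, and your observation that the equality in (c)/(d) genuinely requires $\theta_{i}\neq 0$ is more careful than the paper's own formulation.
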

With the help of \eqref{vx} and \eqref{vz}, we obtain the following Lemma.

\begin{lemma}\label{general}
	Let $M$ be a quasi bi-slant submanifold of a cosymplectic manifold $\bar{M}.$ Then, we have
	\begin{equation*}
	\textbf{(a)}\, \mathcal{T}^{2}U_1=-\cos^{2}\theta_{1}U_1, \quad \textbf{(b)}\,  \mathcal{T}^{2}U_2=-\cos^{2}\theta_{2}U_2,
	\end{equation*}
	\begin{equation*}
	\textbf{(c)}\, \mathcal{B}\mathcal{F}U_1=-\sin ^{2}\theta_{1}U_1, \quad \textbf{(d)}\, \mathcal{B}\mathcal{F}U_2=-\sin ^{2}\theta_{1}U_2,
	\end{equation*}
	\begin{equation*}
	\textbf{(e)}\, \mathcal{T}^{2}U_1+\mathcal{B}\mathcal{F}U_1=-U_1, \quad \textbf{(f)}\, \mathcal{T}^{2}U_2+\mathcal{B}\mathcal{F}U_2=-U_2,
	\end{equation*}
	\begin{equation*}
	\textbf{(g)}\, \mathcal{F}\mathcal{T}U_1+\mathcal{C}\mathcal{F}U_1=0, \quad \textbf{(h)}\, \mathcal{F}\mathcal{T}U_2+\mathcal{C}\mathcal{F}U_2=0,
	\end{equation*}
	for any $U_1 \in \mathcal{D}_1$ and $U_2 \in \mathcal{D}_2$.
\end{lemma}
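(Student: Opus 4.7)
The plan is to deduce everything from two inputs: the almost contact identity \eqref{phikare} applied to vectors orthogonal to $\xi$, and the standard characterization of a slant distribution by the endomorphism $\mathcal{T}$. Since $U_1\in\mathcal{D}_1$ and $U_2\in\mathcal{D}_2$ are orthogonal to $\xi$, we have $\eta(U_1)=\eta(U_2)=0$, so \eqref{phikare} reduces to $\varphi^{2}U_i=-U_i$ for $i=1,2$. This observation is the backbone of the argument.

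I would first establish $(\mathbf{e})$, $(\mathbf{f})$, $(\mathbf{g})$, $(\mathbf{h})$ together. For $U_1\in\mathcal{D}_1$, apply \eqref{vx} to $U_1$ and then to $\mathcal{T}U_1\in\mathcal{D}_1$ (since $\mathcal{D}_1$ is slant, $\mathcal{T}\mathcal{D}_1\subseteq\mathcal{D}_1$, part (a) of the previous lemma) and to $\mathcal{F}U_1\in T^{\perp}M$ (via \eqref{vz}):
\begin{equation*}
\varphi^{2}U_1 \;=\; \varphi(\mathcal{T}U_1)+\varphi(\mathcal{F}U_1) \;=\; \bigl(\mathcal{T}^{2}U_1+\mathcal{B}\mathcal{F}U_1\bigr)+\bigl(\mathcal{F}\mathcal{T}U_1+\mathcal{C}\mathcal{F}U_1\bigr).
\end{equation*}
Matching the tangential part with $-U_1$ gives $(\mathbf{e})$, while the normal part must vanish, giving $(\mathbf{g})$. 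The identical computation on $U_2$ (using $\mathcal{T}\mathcal{D}_2\subseteq\mathcal{D}_2$) gives $(\mathbf{f})$ and $(\mathbf{h})$.

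Next, for $(\mathbf{a})$ and $(\mathbf{b})$ I would invoke the Chen-type characterization of slant distributions: a distribution is slant with angle $\theta$ precisely when $\mathcal{T}^{2}=-\cos^{2}\theta\,I$ on it. Since $\mathcal{D}_1$ is slant with angle $\theta_1$, this yields $\mathcal{T}^{2}U_1=-\cos^{2}\theta_1\,U_1$, and likewise for $\mathcal{D}_2$. For completeness one can sketch the direct proof: for nonzero $U_1\in\mathcal{D}_1$, $\cos\theta_1=\|\mathcal{T}U_1\|/\|\varphi U_1\|$ is constant, and using \eqref{gphixphiy} together with the fact that $g(\mathcal{T}^{2}U_1,U_1)=-g(\mathcal{T}U_1,\mathcal{T}U_1)=-\|\mathcal{T}U_1\|^{2}$ (a consequence of $\mathcal{T}$ being the tangential part of the skew operator $\varphi$), one recovers the identity on the whole distribution.

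Finally, $(\mathbf{c})$ and $(\mathbf{d})$ follow for free by subtracting $(\mathbf{a})$ from $(\mathbf{e})$ and $(\mathbf{b})$ from $(\mathbf{f})$:
\begin{equation*}
\mathcal{B}\mathcal{F}U_1=-U_1-\mathcal{T}^{2}U_1=-(1-\cos^{2}\theta_1)U_1=-\sin^{2}\theta_1\,U_1,
\end{equation*}
and similarly for $U_2$ (where the stated subscript in $(\mathbf{d})$ should read $\theta_2$). The only mildly subtle step is the slant identity in $(\mathbf{a})$–$(\mathbf{b})$; everything else is bookkeeping from \eqref{phikare}, \eqref{vx}, \eqref{vz} together with $\mathcal{D}_i\perp\xi$ and the invariance $\mathcal{T}\mathcal{D}_i\subseteq\mathcal{D}_i$ provided by the previous lemma.
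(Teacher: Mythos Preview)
Your argument is correct and follows exactly the route the paper indicates: the lemma is stated ``with the help of \eqref{vx} and \eqref{vz}'' with no further details, and your computation of $\varphi^{2}U_i$ via those two decompositions, together with the slant condition, is precisely the intended proof. Your observation that the subscript in $(\mathbf{d})$ should read $\theta_2$ is also correct.
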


By using  \eqref{nablaxvarphiy}, Definition \eqref{def1}, \eqref{vx} and \eqref{vz}, we obtain the following Lemma.
\begin{lemma}\label{lemtkare}
	Let $M$ be a quasi bi-slant submanifold of a cosymplectic manifold $\bar{M}.$ Then, we have
\begin{enumerate}
	\item[(i)] $\mathcal{T}_{i}^{2}U_i=-\cos^{2}\theta_{i}U_i,$
	\item[(ii)] $g(\mathcal{T}_i{U_i}, \mathcal{T}_i{V_i})=(\cos^{2}\theta_{i})g(U_i,V_i),$
	\item[(iii)] $g(\mathcal{F}_i{U_i}, \mathcal{F}_i{V_i})=(\sin^{2}\theta_{i})g(U_i,V_i)$
\end{enumerate}
for any $i=1,2,$ $U_1,V_1\in\Gamma({\mathcal{D}_1})$ and $U_2,V_2\in\Gamma({\mathcal{D}_2}).$
\end{lemma}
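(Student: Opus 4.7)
The plan is to observe that part (i) is nothing new: it is a direct restatement of items (a) and (b) of Lemma \ref{general}, so no further work is required there. The content of the statement lies in (ii) and (iii), and both will be deduced from (i) together with two elementary facts: the $\varphi$-skew-symmetry of $g$, namely $g(\varphi X,Y)=-g(X,\varphi Y)$ whenever $X,Y\perp\xi$ (which follows from \eqref{gphixphiy} and \eqref{phikare}), and the fact that $\mathcal{D}_i\perp\xi$, so that $\eta(U_i)=\eta(V_i)=0$ for $U_i,V_i\in\Gamma(\mathcal{D}_i)$.

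For (ii), I would expand $\varphi V_i = \mathcal{T}V_i+\mathcal{F}V_i$ and use that $\mathcal{T}U_i$ is tangent while $\mathcal{F}V_i$ is normal in order to collapse
\[
g(\mathcal{T}U_i,\mathcal{T}V_i)=g(\mathcal{T}U_i,\varphi V_i).
\]
Applying the skew-symmetry of $\varphi$ and then the decomposition $\varphi\mathcal{T}U_i=\mathcal{T}^{2}U_i+\mathcal{F}\mathcal{T}U_i$, together with $V_i\in\Gamma(TM)$, reduces the right-hand side to $-g(\mathcal{T}^{2}U_i,V_i)$. Invoking (i) then gives $\cos^{2}\theta_i\, g(U_i,V_i)$, as required.

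For (iii), the strategy is parallel. I would write $\mathcal{F}U_i=\varphi U_i-\mathcal{T}U_i$, so that, using orthogonality of tangent and normal components once more,
\[
g(\mathcal{F}U_i,\mathcal{F}V_i)=g(\varphi U_i,\mathcal{F}V_i)=g(\varphi U_i,\varphi V_i)-g(\varphi U_i,\mathcal{T}V_i).
\]
The first summand equals $g(U_i,V_i)$ by \eqref{gphixphiy} together with $\eta(U_i)=\eta(V_i)=0$, and the second summand equals $g(\mathcal{T}U_i,\mathcal{T}V_i)=\cos^{2}\theta_i\, g(U_i,V_i)$ by part (ii). Subtracting yields $\sin^{2}\theta_i\, g(U_i,V_i)$.

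I do not anticipate a serious obstacle: the whole argument is bookkeeping with the tangential/normal split, once skew-symmetry of $\varphi$ is noted. The only point worth being careful about is verifying that $\xi$-contributions drop out, and this is guaranteed by the hypothesis $U_i,V_i\in\Gamma(\mathcal{D}_i)$ and the orthogonality $\mathcal{D}_i\perp\langle\xi\rangle$ built into Definition \ref{def1}.
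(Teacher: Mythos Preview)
Your argument is correct and is exactly the standard derivation one expects here; the paper itself does not spell out a proof but merely indicates that the lemma follows from Definition~\ref{def1} together with \eqref{vx} and \eqref{vz}, which is precisely the content of your tangential/normal bookkeeping. One small remark: the paper also cites \eqref{nablaxvarphiy} before this lemma, but as your proof makes clear that equation is not actually needed---the statement is purely algebraic and uses only the almost contact metric relations and the slant condition.
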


We need the following lemma for later use.
\begin{lemma}
	Let $M$ be a quasi bi-slant submanifold of a cosymplectic manifold $\bar{M},$ then for any $Z_1,Z_2\in \Gamma(TM),$ we have the following
	\begin{align}\label{teget}
		\nabla_{Z_1}\mathcal{T}Z_2-\mathcal{T}\nabla_{Z_1}Z_2=\mathcal{A}_{\mathcal{F}Z_2}Z_1+\mathcal{B}
		h(Z_1,Z_2)
	\end{align}
	and
	\begin{align}\label{normal}
	\nabla^{\perp}_{Z_1}\mathcal{F}Z_2-\mathcal{F}\nabla_{Z_1}Z_2=\mathcal{C}
	h(Z_1,Z_2)-h(Z_1,\mathcal{T}Z_2).
	\end{align}
\end{lemma}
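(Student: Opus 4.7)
The plan is to derive both identities simultaneously from the single relation $(\bar{\nabla}_{Z_1}\varphi)Z_2 = 0$, which is exactly the cosymplectic structure equation \eqref{nablaxvarphiy}, and then split the result into its tangential and normal components on $M$.

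First I would expand $\bar\nabla_{Z_1}(\varphi Z_2)$ on the left by writing $\varphi Z_2 = \mathcal{T}Z_2 + \mathcal{F}Z_2$ via \eqref{vx}, then apply the Gauss formula \eqref{gauss} to $\bar\nabla_{Z_1}\mathcal{T}Z_2$ (since $\mathcal{T}Z_2 \in \Gamma(TM)$) and the Weingarten formula \eqref{weingarten} to $\bar\nabla_{Z_1}\mathcal{F}Z_2$ (since $\mathcal{F}Z_2 \in \Gamma(T^\perp M)$). On the right side, $\varphi\bar\nabla_{Z_1}Z_2 = \varphi(\nabla_{Z_1}Z_2 + h(Z_1,Z_2))$ again by Gauss, and I would further decompose this using \eqref{vx} on $\nabla_{Z_1}Z_2$ and \eqref{vz} on $h(Z_1,Z_2)$, producing $\mathcal{T}\nabla_{Z_1}Z_2 + \mathcal{F}\nabla_{Z_1}Z_2 + \mathcal{B}h(Z_1,Z_2) + \mathcal{C}h(Z_1,Z_2)$.

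Setting the two sides equal and collecting terms yields
\[
\nabla_{Z_1}\mathcal{T}Z_2 + h(Z_1,\mathcal{T}Z_2) - \mathcal{A}_{\mathcal{F}Z_2}Z_1 + \nabla^{\perp}_{Z_1}\mathcal{F}Z_2 = \mathcal{T}\nabla_{Z_1}Z_2 + \mathcal{F}\nabla_{Z_1}Z_2 + \mathcal{B}h(Z_1,Z_2) + \mathcal{C}h(Z_1,Z_2).
\]
Comparing the $\Gamma(TM)$ parts gives \eqref{teget}, and comparing the $\Gamma(T^\perp M)$ parts gives \eqref{normal}.

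There is no real obstacle here: the argument is a direct bookkeeping exercise once the cosymplectic condition $(\bar\nabla_{Z_1}\varphi)Z_2=0$ is invoked. The only mild care needed is to keep the tangential/normal decomposition consistent, \emph{i.e.} to remember that $\nabla_{Z_1}\mathcal{T}Z_2$, $\mathcal{A}_{\mathcal{F}Z_2}Z_1$, $\mathcal{T}\nabla_{Z_1}Z_2$, and $\mathcal{B}h(Z_1,Z_2)$ are tangent to $M$, while $h(Z_1,\mathcal{T}Z_2)$, $\nabla^{\perp}_{Z_1}\mathcal{F}Z_2$, $\mathcal{F}\nabla_{Z_1}Z_2$, and $\mathcal{C}h(Z_1,Z_2)$ are normal to $M$, so that the projection is unambiguous.
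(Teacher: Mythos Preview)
Your proposal is correct and follows essentially the same approach as the paper: start from the cosymplectic condition $(\bar{\nabla}_{Z_1}\varphi)Z_2=0$, expand both sides using the Gauss and Weingarten formulas together with the decompositions \eqref{vx} and \eqref{vz}, and then separate tangential and normal components to obtain \eqref{teget} and \eqref{normal}. The paper's proof is line-for-line the same computation.
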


\begin{proof}
	Since $\bar{M}$ is a cosmyplectic manifold, we have that $$(\bar{\nabla}_{Z_1}\varphi)Z_2=0$$ which implies that $$\bar{\nabla}_{Z_1}\varphi Z_2-\varphi\bar{\nabla}_{Z_1}Z_2=0.$$ By using \eqref{gauss} and \eqref{vx}, we get
	$$\bar{\nabla}_{Z_1}\mathcal{T}Z_2+\bar{\nabla}_{Z_1}\mathcal{F}Z_2-\varphi(\nabla_{Z_1}Z_2+h(Z_1,Z_2))=0.$$ Taking into account of \eqref{gauss}, \eqref{weingarten}, \eqref{vx} and \eqref{vz}, we obtain
	\begin{align*}
	&\nabla_{Z_1}\mathcal{T}Z_2+h(Z_1,\mathcal{T}Z_2)-\mathcal{A}_{\mathcal{F}Z_2}Z_1+\nabla^{\perp}_{Z_1}\mathcal{F}Z_2\\
	&-\mathcal{T}\nabla_{Z_1}Z_2-\mathcal{F}\nabla_{Z_1}Z_2-\mathcal{B}h{Z_1,Z_2}-\mathcal{C}h(Z_1,Z_2)=0.
	\end{align*}
	Comparing the tangential and normal components, we have the required results.
\end{proof}

In a similar way, we have:

\begin{lemma}
	Let $M$ be a quasi bi-slant submanifold of a cosymplectic manifold $\bar{M},$ then  we have the following
	\begin{align}\label{teget1}
	\nabla_{Z_1}\mathcal{B}W_1-\mathcal{B}\nabla^{\perp}_{Z_1}W_1=\mathcal{A}_{\mathcal{C}W_1}Z_1
	-\mathcal{T}\mathcal{A}_{W_1}Z_1
	\end{align}
	and
	\begin{align}\label{normal1}
	\nabla^{\perp}_{Z_1}\mathcal{C}W_1-\mathcal{C}\nabla^{\perp}_{Z_1}W_1=-\mathcal{F}
	\mathcal{A}_{W_1}Z_1-h(Z_1,\mathcal{B}W_1)
	\end{align}
	for any $Z_1\in \Gamma(TM)$ and $W_1\in \Gamma(T^\perp M).$
\end{lemma}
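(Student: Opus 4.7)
The plan is to mimic the proof of the preceding lemma, but now with $W_1$ a section of the normal bundle rather than the tangent bundle, so that the Weingarten formula \eqref{weingarten} replaces the Gauss formula \eqref{gauss} when we differentiate $W_1$. Since $\bar{M}$ is cosymplectic, equation \eqref{nablaxvarphiy} applied to $W_1$ gives $(\bar{\nabla}_{Z_1}\varphi)W_1 = 0$, that is,
\begin{equation*}
\bar{\nabla}_{Z_1}\varphi W_1 = \varphi\,\bar{\nabla}_{Z_1} W_1,
\end{equation*}
and this single identity will yield both \eqref{teget1} and \eqref{normal1} after decomposing each side into its tangential and normal components.

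First I would expand the left-hand side. By \eqref{vz}, $\varphi W_1 = \mathcal{B}W_1 + \mathcal{C}W_1$, with $\mathcal{B}W_1 \in \Gamma(TM)$ and $\mathcal{C}W_1 \in \Gamma(T^{\perp}M)$. Applying \eqref{gauss} to the tangential piece and \eqref{weingarten} to the normal piece, the left-hand side becomes
\begin{equation*}
\nabla_{Z_1}\mathcal{B}W_1 + h(Z_1,\mathcal{B}W_1) - \mathcal{A}_{\mathcal{C}W_1}Z_1 + \nabla^{\perp}_{Z_1}\mathcal{C}W_1.
\end{equation*}

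Next I would expand the right-hand side. By \eqref{weingarten}, $\bar{\nabla}_{Z_1}W_1 = -\mathcal{A}_{W_1}Z_1 + \nabla^{\perp}_{Z_1}W_1$. Now $\mathcal{A}_{W_1}Z_1$ is tangent to $M$, so by \eqref{vx} we split $\varphi(\mathcal{A}_{W_1}Z_1) = \mathcal{T}\mathcal{A}_{W_1}Z_1 + \mathcal{F}\mathcal{A}_{W_1}Z_1$; while $\nabla^{\perp}_{Z_1}W_1$ is normal, so by \eqref{vz} we split $\varphi(\nabla^{\perp}_{Z_1}W_1) = \mathcal{B}\nabla^{\perp}_{Z_1}W_1 + \mathcal{C}\nabla^{\perp}_{Z_1}W_1$. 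Hence the right-hand side reads
\begin{equation*}
-\mathcal{T}\mathcal{A}_{W_1}Z_1 - \mathcal{F}\mathcal{A}_{W_1}Z_1 + \mathcal{B}\nabla^{\perp}_{Z_1}W_1 + \mathcal{C}\nabla^{\perp}_{Z_1}W_1.
\end{equation*}
Equating both expressions and comparing tangential parts yields \eqref{teget1}, while comparing normal parts yields \eqref{normal1}.

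There is no genuine obstacle here; the entire argument is a bookkeeping exercise built on the cosymplectic condition \eqref{nablaxvarphiy} together with the decompositions \eqref{vx} and \eqref{vz}. The only point requiring care is the bookkeeping itself: one must apply Gauss to $\bar{\nabla}_{Z_1}\mathcal{B}W_1$ but Weingarten to both $\bar{\nabla}_{Z_1}\mathcal{C}W_1$ and $\bar{\nabla}_{Z_1}W_1$, and then correctly sort the resulting five summands on each side into tangential and normal components before matching them.
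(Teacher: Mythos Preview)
Your proposal is correct and follows exactly the approach the paper intends: it omits an explicit proof, writing only ``In a similar way, we have,'' meaning the argument of the preceding lemma carried over with the Weingarten formula \eqref{weingarten} applied to $W_1$ in place of the Gauss formula. Your expansion of both sides of $\bar{\nabla}_{Z_1}\varphi W_1=\varphi\bar{\nabla}_{Z_1}W_1$ and the subsequent matching of tangential and normal parts is precisely that computation, and the bookkeeping is accurate.
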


\section{Integrability and Totally geodesic foliations}
In this section we give some necessary and sufficient condition for  the integrability of the distributions.

First, we have the following theorem:
\begin{theorem}
Let $M$ be a quasi bi-slant submanifolds of $\bar{M}.$ The invariant distribution $\mathcal{D}$ is integrable if and only if 
\begin{align*}
g(\mathcal{T}(\nabla_X\mathcal{T}Y-\nabla_Y\mathcal{T}X), Z)
&=g(h(X,\mathcal{T}Y)-h(Y,\mathcal{T}X),\varphi QZ+\varphi RZ)
\end{align*}
for any $X,Y\in \Gamma(\mathcal{D})$ and $Z\in\Gamma(\mathcal{D}_1 \oplus \mathcal{D}_2).$
\end{theorem}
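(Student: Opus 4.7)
The plan is to show $\mathcal{D}$ is integrable by proving $[X,Y]\in\Gamma(\mathcal{D})$ for all $X,Y\in\Gamma(\mathcal{D})$, which amounts to showing $g([X,Y],Z)=0$ for every $Z$ in the complement $\mathcal{D}_1\oplus\mathcal{D}_2\oplus\langle\xi\rangle$.

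First I would dispose of the $\xi$-component: for $X,Y\in\Gamma(\mathcal{D})$ we have $g(X,\xi)=g(Y,\xi)=0$, and by \eqref{nablaxxi}, $\nabla_X\xi=\nabla_Y\xi=0$, so a straightforward computation using $g(\nabla_X Y,\xi)=X\,g(Y,\xi)-g(Y,\nabla_X\xi)=0$ gives $g([X,Y],\xi)=0$ automatically. Thus integrability of $\mathcal{D}$ is equivalent to $g([X,Y],Z)=0$ for all $Z\in\Gamma(\mathcal{D}_1\oplus\mathcal{D}_2)$.

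Next, I would exploit invariance: since $\varphi\mathcal{D}=\mathcal{D}$, we have $\mathcal{F}X=\mathcal{F}Y=0$ and $\varphi Y=\mathcal{T}Y\in\Gamma(\mathcal{D})$, with $\mathcal{T}^2Y=-Y$ (as $\eta(Y)=0$). Applying \eqref{nablaxvarphiy} with the pair $(X,\mathcal{T}Y)$ gives $\bar\nabla_X\varphi(\mathcal{T}Y)=\varphi\bar\nabla_X\mathcal{T}Y$, i.e.\ $-\bar\nabla_X Y=\varphi\bar\nabla_X\mathcal{T}Y$. Expanding both sides using \eqref{gauss}, \eqref{vx} and \eqref{vz} and then taking tangential parts yields
\begin{equation*}
\nabla_X Y=-\mathcal{T}\nabla_X\mathcal{T}Y-\mathcal{B}h(X,\mathcal{T}Y),
\end{equation*}
and an analogous identity with $X$ and $Y$ swapped. (Alternatively, one can invoke \eqref{teget} applied to $\mathcal{T}Y$ to reach the same formula quickly.)

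Subtracting these two expressions produces
\begin{equation*}
[X,Y]=\nabla_X Y-\nabla_Y X=-\mathcal{T}\bigl(\nabla_X\mathcal{T}Y-\nabla_Y\mathcal{T}X\bigr)-\mathcal{B}\bigl(h(X,\mathcal{T}Y)-h(Y,\mathcal{T}X)\bigr).
\end{equation*}
Taking the $g$-inner product with $Z\in\Gamma(\mathcal{D}_1\oplus\mathcal{D}_2)$ then handles each term: the first contributes $-g(\mathcal{T}(\nabla_X\mathcal{T}Y-\nabla_Y\mathcal{T}X),Z)$ directly, while for the second I would use the skew-symmetry $g(\varphi U,V)=-g(U,\varphi V)$ (a direct consequence of \eqref{gphixphiy}), together with $\mathcal{B}W=\varphi W-\mathcal{C}W$ from \eqref{vz} and orthogonality of $\mathcal{C}W\in T^\perp M$ to $Z\in TM$, to rewrite
\begin{equation*}
g\bigl(\mathcal{B}(h(X,\mathcal{T}Y)-h(Y,\mathcal{T}X)),Z\bigr)=-g\bigl(h(X,\mathcal{T}Y)-h(Y,\mathcal{T}X),\varphi Z\bigr).
\end{equation*}
Finally, decomposing $Z=\mathcal{Q}Z+\mathcal{R}Z$ gives $\varphi Z=\varphi\mathcal{Q}Z+\varphi\mathcal{R}Z$, matching the right-hand side of the theorem. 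Setting $g([X,Y],Z)=0$ then yields the stated equivalence in both directions.

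The only real subtlety, and the step I would double-check carefully, is the bookkeeping of projections $\mathcal{T},\mathcal{F},\mathcal{B},\mathcal{C}$ combined with the skew-symmetry of $\varphi$; in particular, noting that $h(X,\mathcal{T}Y)$ is already normal and that only the $\mathcal{F}$-parts of $\varphi\mathcal{Q}Z,\varphi\mathcal{R}Z$ pair non-trivially with it, so writing $\varphi\mathcal{Q}Z+\varphi\mathcal{R}Z$ rather than $\mathcal{F}\mathcal{Q}Z+\mathcal{F}\mathcal{R}Z$ is consistent. Everything else is routine substitution.
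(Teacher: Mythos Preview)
Your proof is correct and follows essentially the same route as the paper: both dispose of the $\xi$-component via \eqref{nablaxxi}, then use the cosymplectic condition $(\bar\nabla\varphi)=0$ together with $\mathcal{F}Y=0$, the Gauss formula, and the decompositions \eqref{vx}--\eqref{vz} to reduce $g([X,Y],Z)$ to the stated identity. The only cosmetic difference is that you first derive the explicit tangential formula $\nabla_X Y=-\mathcal{T}\nabla_X\mathcal{T}Y-\mathcal{B}h(X,\mathcal{T}Y)$ and then pair the resulting bracket with $Z$, whereas the paper works at the level of inner products from the outset; the underlying computation is the same.
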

\begin{proof}
	The distribution $\mathcal{D}$ is integrable on $M$ if and only if $$g([X,Y],\xi)=0\quad \mathrm{and}\quad g([X,Y],Z)=0$$ for any $X,Y\in \Gamma(\mathcal{D})$, $Z\in\Gamma(\mathcal{D}_1 \oplus \mathcal{D}_2)$ and $\xi \in \Gamma(TM).$ Since $M$ is a cosymplectic manifold, we immediately have $g([X,Y],\xi)=0.$ Thus $\mathcal{D}$ is integrable if and only if $g([X,Y],Z)=0.$ Now, for any $X,Y\in \mathcal{D}$ and $Z=\mathcal{Q}Z+\mathcal{R}Z\in \Gamma(\mathcal{D}_1\oplus \mathcal{D}_2),$ by using \eqref{gphixphiy}, \eqref{gauss}, we obtain
\begin{align*}
		g([X,Y],Z)&=g(\varphi\bar{\nabla}_XY,\varphi Z)-\eta({\bar{\nabla}_XY})\eta(Z)-g(\varphi\bar{\nabla}_YX,\varphi Z)+\eta({\bar{\nabla}_YX})\eta(Z).
\end{align*}		
Now, using \eqref{nablaxxi}, \eqref{vx} and $\mathcal{F}Y=0$ for any $Y\in\Gamma(\mathcal{D}),$ we have		
\begin{align*}
g([X,Y],Z)&=g(\bar{\nabla}_X\varphi Y,\varphi Z)-  g(\bar{\nabla}_Y\varphi X,\varphi Z)\\
&=g(\bar{\nabla}_{X}\mathcal{T}Y,\varphi Z)-  g(\bar{\nabla}_{Y}\mathcal{T}X,\varphi Z).
\end{align*}
Taking into account of \eqref{gauss} and \eqref{vx1} in the above equation, we get 
\begin{align*}
g([X,Y],Z)&=-g(\varphi\nabla_X\mathcal{T}Y, Z)+g(h(X,\mathcal{T}Y),\varphi Z)\\
&+g(\varphi\nabla_{Y}\mathcal{T}X,Z)-g(h(Y,\mathcal{T}X),\varphi Z).
\end{align*}
Now again taking into account the equation \eqref{vx}, we obtain		
\begin{align*}
g([X,Y],Z)&=g(\mathcal{T}(\nabla_Y\mathcal{T}X-\nabla_X\mathcal{T}Y), Z)\\
&+g(h(X,\mathcal{T}Y)-h(Y,\mathcal{T}X),\varphi QZ+\varphi RZ)
\end{align*}
which completes the proof.
\end{proof}

For the slant distrubition $\mathcal{D}_1,$ we have:
\begin{theorem}
	Let $M$ be a quasi bi-slant submanifolds of $\bar{M}.$ The slant distribution $\mathcal{D}_1$ is integrable if and only if 
	\begin{align*}
g(\nabla^{\perp}_{U_1}\mathcal{F}V_1+\nabla^{\perp}_{V_1}\mathcal{F}U_1,\mathcal{F}\mathcal{R}Z)&=	
g(\mathcal{A}_{\mathcal{F}\mathcal{T}V_1}U_1-\mathcal{A}_{\mathcal{F}\mathcal{T}U_1}V_1,Z)\\
&+g(\mathcal{A}_{\mathcal{F}V_1}U_1+\mathcal{A}_{\mathcal{F}U_1}V_1,\mathcal{T}Z)
\end{align*}
for any $U_1,V_1\in \Gamma(\mathcal{D}_1)$, $Z\in\Gamma(\mathcal{D}\oplus \mathcal{D}_2).$
\end{theorem}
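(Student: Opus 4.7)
The plan is to reduce the integrability of $\mathcal{D}_1$ to the stated pairing identity by computing $g([U_1, V_1], Z)$ directly. Since $TM = \mathcal{D} \oplus \mathcal{D}_1 \oplus \mathcal{D}_2 \oplus \langle \xi \rangle$ is orthogonal, integrability amounts to the vanishing of $g([U_1, V_1], Z)$ for every $Z \in \Gamma(\mathcal{D} \oplus \mathcal{D}_2)$ together with the $\xi$-component; the latter is automatic from \eqref{nablaxxi}, since $U_1, V_1 \perp \xi$ forces $g([U_1, V_1], \xi) = 0$. So the real work is with a general such $Z$.

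Using $\eta(Z) = 0$ in \eqref{gphixphiy} I would rewrite $g([U_1, V_1], Z) = g(\varphi[U_1, V_1], \varphi Z)$, and the cosymplectic identity \eqref{nablaxvarphiy} then lets me pull $\varphi$ past the connection, giving $\varphi \bar{\nabla}_{U_1} V_1 = \bar{\nabla}_{U_1}(\mathcal{T} V_1 + \mathcal{F} V_1)$ and similarly with $U_1 \leftrightarrow V_1$. Expanding by \eqref{gauss} on $\mathcal{T} V_1 \in \Gamma(\mathcal{D}_1)$ and \eqref{weingarten} on $\mathcal{F} V_1 \in \Gamma(T^\perp M)$, writing $\varphi Z = \mathcal{T} Z + \mathcal{F}\mathcal{R} Z$ (since $\mathcal{F}\mathcal{P} = 0$ and $\mathcal{Q} Z = 0$), pairing tangent components against $\mathcal{T} Z$ and normal components against $\mathcal{F}\mathcal{R} Z$, and converting the resulting $h$-terms via \eqref{ghxyv}, produces an expression for $g([U_1, V_1], Z)$ involving the antisymmetric combinations $\nabla_{U_1}\mathcal{T} V_1 - \nabla_{V_1}\mathcal{T} U_1$, $\mathcal{A}_{\mathcal{F} V_1} U_1 - \mathcal{A}_{\mathcal{F} U_1} V_1$, $h(U_1, \mathcal{T} V_1) - h(V_1, \mathcal{T} U_1)$, and $\nabla^\perp_{U_1}\mathcal{F} V_1 - \nabla^\perp_{V_1}\mathcal{F} U_1$.

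The main obstacle is eliminating the interior tangential derivative $\nabla_{U_1}\mathcal{T} V_1 - \nabla_{V_1}\mathcal{T} U_1$, which is absent from the target identity. For this I would exploit the slant relation $\mathcal{T}^2 V_1 = -\cos^2\theta_1 V_1$ from Lemma \ref{general}: applying \eqref{teget} with $Z_2 = \mathcal{T} V_1$ yields $\mathcal{T}\nabla_{U_1}\mathcal{T} V_1 = -\cos^2\theta_1 \nabla_{U_1} V_1 - \mathcal{A}_{\mathcal{F}\mathcal{T} V_1} U_1 - \mathcal{B}\, h(U_1, \mathcal{T} V_1)$. Combining this with the skew-symmetry $g(\mathcal{T} X, Y) = -g(X, \mathcal{T} Y)$ on tangent vectors (a consequence of \eqref{gphixphiy} and $\eta\circ\varphi = 0$) and with $g(\mathcal{B} W, Z) = -g(W, \mathcal{F}\mathcal{R} Z)$ for normal $W$ and $Z \in \Gamma(\mathcal{D} \oplus \mathcal{D}_2)$, I can recast $g(\nabla_{U_1}\mathcal{T} V_1, \mathcal{T} Z)$ as $\cos^2\theta_1\, g(\nabla_{U_1} V_1, Z) + g(\mathcal{A}_{\mathcal{F}\mathcal{T} V_1} U_1, Z) - g(h(U_1, \mathcal{T} V_1), \mathcal{F}\mathcal{R} Z)$, and similarly with $U_1 \leftrightarrow V_1$.

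Substituting back, the $h$-pairings cancel antisymmetrically and the $\cos^2\theta_1$ contributions combine into $\cos^2\theta_1\, g([U_1, V_1], Z)$, which moves to the left-hand side. What remains is $\sin^2\theta_1\, g([U_1, V_1], Z)$ equal to the shape-operator and normal-connection combination appearing in the theorem. Since $M$ is proper, $\sin^2\theta_1 \neq 0$, so $g([U_1, V_1], Z) = 0$ for all $Z \in \Gamma(\mathcal{D} \oplus \mathcal{D}_2)$ precisely when the stated identity holds, which gives the desired equivalence.
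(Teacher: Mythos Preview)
Your proposal is correct and follows essentially the same route as the paper: reduce integrability to the vanishing of $g([U_1,V_1],Z)$ for $Z\in\Gamma(\mathcal{D}\oplus\mathcal{D}_2)$, pass through $\varphi$ via \eqref{gphixphiy} and \eqref{nablaxvarphiy}, and use the slant relation $\mathcal{T}^2=-\cos^2\theta_1$ on $\mathcal{D}_1$ to peel off a $\cos^2\theta_1\,g([U_1,V_1],Z)$ term, leaving $\sin^2\theta_1\,g([U_1,V_1],Z)$ equal to the displayed combination. The only tactical difference is that the paper applies $\varphi$ a second time directly to $\mathcal{T}V_1$ (writing $\varphi\mathcal{T}V_1=\mathcal{T}^2V_1+\mathcal{F}\mathcal{T}V_1$) to obtain the $\cos^2\theta_1$ contribution immediately, whereas you first expand fully via Gauss--Weingarten and then invoke \eqref{teget} with $Z_2=\mathcal{T}V_1$ to eliminate $\nabla_{U_1}\mathcal{T}V_1$; these are equivalent rearrangements of the same computation.
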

\begin{proof}
	The distribution $\mathcal{D}_1$ is integrable on $M$ if and only if $$g([U_1,V_1],\xi)=0\quad \mathrm{and}\quad g([U_2,V_2],Z)=0$$ for any $U_1,V_1\in \Gamma(\mathcal{D}_1)$, $Z\in\Gamma(\mathcal{D}\oplus \mathcal{D}_2)$ and $\xi \in \Gamma(TM).$ The first case is trivial. Thus $\mathcal{D}_1$ is integrable if and only if $g([U_1,V_1],Z)=0.$ Now, for any $U_1,V_1\in \mathcal{D}_1$ and $Z=\mathcal{P}Z+\mathcal{R}Z\in \Gamma(\mathcal{D}\oplus \mathcal{D}_2),$ by using \eqref{gphixphiy}, \eqref{gauss}, we obtain
		\begin{align*}
			g([U_1,V_1],Z)&=
			-g(\bar{\nabla}_{U_1}\varphi \mathcal{T}V_1, Z)-g(\bar{\nabla}_{U_1}\mathcal{F}V_1,\varphi Z)\\
			&+g(\bar{\nabla}_{V_1}\varphi \mathcal{T}U_1, Z)-g(\bar{\nabla}_{V_1}\mathcal{F}U_1,\varphi Z)
	\end{align*}		
	Taking into account the equation lemma \eqref{lemtkare} (i) in the above equation, we get			
\begin{align*}
	g([U_1,V_1],Z)&=
			\cos^2\theta_1g([U_1,V_1],Z)-g(\bar{\nabla}_{U_1}\mathcal{F}\mathcal{T}V_1-\bar{\nabla}_{V_1}\mathcal{F}\mathcal{T}U_1,Z)\\
			&-g(\bar{\nabla}_{U_1}\mathcal{F}V_1+\bar{\nabla}_{V_1}\mathcal{F}U_1,\varphi \mathcal{P}Z+\varphi \mathcal{R}Z).
		\end{align*}
Now, using \eqref{weingarten} and \eqref{vx1}, we obtain
 	\begin{align*}
 		g([U_1,V_1],Z)&=
			\cos^2\theta_1g([U_1,V_1],Z)+g(\mathcal{A}_{\mathcal{F}\mathcal{T}V_1}U_1-\mathcal{A}_{\mathcal{F}\mathcal{T}U_1}V_1,Z)\\
			&+g(\mathcal{A}_{\mathcal{F}V_1}U_1+\mathcal{A}_{\mathcal{F}U_1}V_1,\mathcal{T}Z)\\
			&-g(\nabla^{\perp}_{U_1}\mathcal{F}V_1+\nabla^{\perp}_{V_1}\mathcal{F}U_1,\mathcal{F}\mathcal{R}Z)
	\end{align*}
or 
	\begin{align*}
	\sin^2\theta_1g([U_1,V_1],Z)&=g(\mathcal{A}_{\mathcal{F}\mathcal{T}V_1}U_1-\mathcal{A}_{\mathcal{F}\mathcal{T}U_1}V_1,Z)\\
	&+g(\mathcal{A}_{\mathcal{F}V_1}U_1+\mathcal{A}_{\mathcal{F}U_1}V_1,\mathcal{T}Z)\\
	&-g(\nabla^{\perp}_{U_1}\mathcal{F}V_1+\nabla^{\perp}_{V_1}\mathcal{F}U_1,\mathcal{F}\mathcal{R}Z)	
	\end{align*}
which gives the assertion.
\end{proof}
		In a similar way, we obtain the following case for the slant distribution $\mathcal{D}_2.$
\begin{theorem}
	Let $M$ be a quasi bi-slant submanifolds of $\bar{M}.$ The slant distribution $\mathcal{D}_2$ is integrable if and only if 
		\begin{align*}
	\mathcal{T}(\nabla_{U_2}\mathcal{T}V_2-\mathcal{A}_{\mathcal{F}V_2}U_2)\in\Gamma(\mathcal{D}_2),
	\end{align*}
	\begin{align*}
	\mathcal{B}(h(U_2,TV_2)+\nabla^{\perp}_{U_2}\mathcal{F}V_2)\in\Gamma(TM)^{\perp}
	\end{align*}
	and
	\begin{align*}
	g(\mathcal{A}_{\mathcal{F}Z}V_2-\nabla_{V_2}\mathcal{T}Z,\mathcal{T}U_2)=g(h(V_2,\mathcal{T}Z)+
	\nabla^{\perp}_{V_2}\mathcal{F}Z,\mathcal{F}U_2)
	\end{align*}
	for any $U_2, V_2\in\Gamma(\mathcal{D}_2),\ Z=\mathcal{P}Z+\mathcal{Q}Z\in\Gamma(\mathcal{D}\oplus \mathcal{D}_1)$ and $W\in\Gamma(TM)^\perp.$
\end{theorem}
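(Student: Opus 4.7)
The plan mirrors the proofs of the two preceding integrability theorems. First I would observe that $\mathcal{D}_2$ is integrable if and only if $[U_2,V_2]\in\Gamma(\mathcal{D}_2)$ for all $U_2,V_2\in\Gamma(\mathcal{D}_2)$. Just as in the earlier theorems, the relation \eqref{nablaxxi} yields $g([U_2,V_2],\xi)=0$ automatically, so the job is reduced to annihilating the projections of $[U_2,V_2]$ onto $\mathcal{D}$ and $\mathcal{D}_1$; the three stated conditions will correspond to precisely these projections read off through $\mathcal{T}$, $\mathcal{B}$, and a direct pairing, respectively.

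The central computation is to expand $\varphi[U_2,V_2]$. Since $\bar{M}$ is cosymplectic, $(\bar{\nabla}_{U_2}\varphi)V_2=0$, which after symmetrizing gives $\varphi[U_2,V_2]=\bar{\nabla}_{U_2}\varphi V_2-\bar{\nabla}_{V_2}\varphi U_2$. Substituting $\varphi V_2=\mathcal{T}V_2+\mathcal{F}V_2$ from \eqref{vx} and applying the Gauss \eqref{gauss} and Weingarten \eqref{weingarten} formulas, I would separate tangential and normal components to obtain
\begin{align*}
\mathcal{T}[U_2,V_2]&=\nabla_{U_2}\mathcal{T}V_2-\nabla_{V_2}\mathcal{T}U_2-\mathcal{A}_{\mathcal{F}V_2}U_2+\mathcal{A}_{\mathcal{F}U_2}V_2,\\
\mathcal{F}[U_2,V_2]&=h(U_2,\mathcal{T}V_2)-h(V_2,\mathcal{T}U_2)+\nabla^{\perp}_{U_2}\mathcal{F}V_2-\nabla^{\perp}_{V_2}\mathcal{F}U_2.
\end{align*}

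From here I would extract the three conditions by projecting onto the summands of \eqref{vtm} and \eqref{tdikm}. The first bullet isolates what must vanish in the tangential identity: since $\mathcal{T}$ preserves each slant distribution (Lemma \ref{general}(a),(b)) while mapping $\mathcal{D}$ into itself, the requirement $\mathcal{T}[U_2,V_2]\in\Gamma(\mathcal{D}_2)$ is equivalent to $\mathcal{T}(\nabla_{U_2}\mathcal{T}V_2-\mathcal{A}_{\mathcal{F}V_2}U_2)$ (together with its $U_2\leftrightarrow V_2$ counterpart) having no $\mathcal{D}\oplus\mathcal{D}_1$ part. The second bullet is extracted by applying $\mathcal{B}$ to the normal-part identity and invoking Lemma \ref{general}(c),(d) together with $\mathcal{B}|_\mu=0$: the $\mathcal{F}\mathcal{D}_1$-component of $\mathcal{F}[U_2,V_2]$ is detected by $\mathcal{B}(h(U_2,\mathcal{T}V_2)+\nabla^{\perp}_{U_2}\mathcal{F}V_2)$, and it must vanish. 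For the third bullet, I would pair $[U_2,V_2]$ with an arbitrary $Z\in\Gamma(\mathcal{D}\oplus\mathcal{D}_1)$, move $\varphi$ across the pairing using \eqref{gphixphiy}, split $\varphi Z=\mathcal{T}Z+\mathcal{F}Z$, and then apply Gauss--Weingarten and Lemma \ref{lemtkare} one more time; the resulting terms regroup into the stated identity involving $\mathcal{A}_{\mathcal{F}Z}$, $\nabla^{\perp}\mathcal{F}Z$, and the $\mathcal{T}U_2,\mathcal{F}U_2$ factors.

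The main obstacle is the careful bookkeeping across the six simultaneous direct summands of $TM$ and $T^{\perp}M$ exhibited in \eqref{vtm}--\eqref{tdikm}. Because the slant angles $\theta_1,\theta_2$ are proper, Lemma \ref{lemtkare} furnishes nonvanishing $\cos^{2}\theta_i$ and $\sin^{2}\theta_i$ prefactors that guarantee each projection is faithfully detected by the operator being applied; this is what allows the three conditions to become individually necessary and jointly sufficient rather than redundant or insufficient.
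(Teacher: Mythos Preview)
The paper does not actually give a proof of this theorem; it is introduced with the phrase ``In a similar way, we obtain the following case for the slant distribution $\mathcal{D}_2$,'' immediately after the proof of Theorem~4.2. Your plan of mirroring the preceding integrability proofs is therefore exactly what the paper intends, and the identities you derive for $\mathcal{T}[U_2,V_2]$ and $\mathcal{F}[U_2,V_2]$ via \eqref{teget}--\eqref{normal} are the natural way to carry it out.

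One remark on the shape of the argument: the stated conditions are written without explicit antisymmetrization in $U_2,V_2$, and your parenthetical ``together with its $U_2\leftrightarrow V_2$ counterpart'' is the correct reading, since the hypothesis is that they hold for \emph{all} $U_2,V_2\in\Gamma(\mathcal{D}_2)$. In fact the first two conditions coincide verbatim with those of Theorem~\ref{d2paralel} on the totally geodesic foliation, and the third one rewrites $g(\bar{\nabla}_{V_2}U_2,Z)=0$ after moving $\varphi$ across via \eqref{gphixphiy} and applying Gauss--Weingarten; so each condition is really a statement about $\nabla_{U_2}V_2$ rather than the bracket, and integrability is then read off by subtracting the $(U_2,V_2)$ and $(V_2,U_2)$ instances. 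Your sketch handles this correctly.
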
	

\begin{theorem}\label{dparalel}
	Let $M$ be a quasi bi-slant submanifolds of $\bar{M}.$ The invariant distribution $\mathcal{D}$ defines totally geodesic foliation on $M$ if and only if 
	\begin{align}\label{dpar1}
	g(\nabla_{X}\mathcal{T}Y,\mathcal{T}Z)=-g(h(X,\mathcal{T}Y),\mathcal{F}Z)
	\end{align}
	and
	\begin{align}\label{dpar2}
	\mathcal{F}\nabla_{X}\mathcal{T}Y_1+\mathcal{C}h(X,\mathcal{T}Y)\in\Gamma(TM)
	\end{align}
for any $X, Y\in\Gamma(\mathcal{D}),\ Z=\mathcal{Q}Z+\mathcal{R}Z\in\Gamma(\mathcal{D}_1\oplus \mathcal{D}_2)$ and $W\in\Gamma(TM)^\perp.$
\end{theorem}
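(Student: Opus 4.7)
The plan is to use the equivalence that $\mathcal{D}$ defines a totally geodesic foliation on $M$ precisely when $\nabla_{X}Y \in \Gamma(\mathcal{D})$ for all $X,Y \in \Gamma(\mathcal{D})$, and to test this component-by-component against the splitting $TM = \mathcal{D} \oplus \mathcal{D}_1 \oplus \mathcal{D}_2 \oplus \langle\xi\rangle$ together with the normal bundle via the $\mathcal{F}$-projection. The $\xi$-direction is immediate: $\bar{\nabla}_{X}\xi = 0$ from \eqref{nablaxxi} combined with $g(Y,\xi) = 0$ for $Y \in \mathcal{D}$ gives $g(\nabla_{X}Y,\xi) = 0$ automatically, so the entire content of the theorem resides in testing against $Z = \mathcal{Q}Z + \mathcal{R}Z$ and in controlling the normal component of $\nabla_{X}Y$.

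For the tangential test, I would start from $g(\nabla_{X}Y,Z) = g(\bar{\nabla}_{X}Y,Z)$ and apply the metric compatibility \eqref{gphixphiy} (using $\eta(Z) = 0$) together with the cosymplectic identity \eqref{nablaxvarphiy} to replace $\varphi \bar{\nabla}_{X}Y$ by $\bar{\nabla}_{X}\mathcal{T}Y$, exploiting $\mathcal{F}Y = 0$ for $Y \in \mathcal{D}$. Decomposing $\varphi Z = \mathcal{T}Z + \mathcal{F}Z$ via \eqref{vx} and expanding $\bar{\nabla}_{X}\mathcal{T}Y$ through the Gauss formula \eqref{gauss} will separate tangential and normal pairings and produce
\[
g(\nabla_{X}Y,Z) = g(\nabla_{X}\mathcal{T}Y,\mathcal{T}Z) + g(h(X,\mathcal{T}Y),\mathcal{F}Z),
\]
whose vanishing for every such $Z$ is exactly \eqref{dpar1}.

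For \eqref{dpar2}, I would feed $Z_1 = X$ and $Z_2 = \mathcal{T}Y \in \Gamma(\mathcal{D})$ into the normal-part identity \eqref{normal} of Lemma 3.5. Since $\mathcal{F}(\mathcal{T}Y) = 0$ and $\mathcal{T}^{2}Y = -Y$ from \eqref{phikare} (using $Y \perp \xi$), the identity collapses to
\[
\mathcal{F}\nabla_{X}\mathcal{T}Y + \mathcal{C}h(X,\mathcal{T}Y) = -h(X,Y),
\]
which is a priori a section of $T^{\perp}M$; the assertion that its left-hand side lies in $\Gamma(TM)$ therefore forces it to vanish, providing the normal-bundle portion of the geodesic-foliation requirement.

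The main obstacle will be the careful bookkeeping needed to see that \eqref{dpar1} and \eqref{dpar2} are jointly sufficient (and not redundant) for $\nabla_{X}Y$ to sit inside $\mathcal{D}$: the first rules out the $\mathcal{D}_1 \oplus \mathcal{D}_2$ component directly, while the second controls the coupling back to $\nabla_{X}Y$ through $\varphi$ in the normal bundle. Keeping the identities $\mathcal{F}Y = \mathcal{F}(\mathcal{T}Y) = 0$ and $\mathcal{T}^{2} = -\mathrm{Id}$ on $\mathcal{D}$ cleanly in sight is what makes the $\varphi$-decomposition unwind without cross-contamination among the four summands of $TM$.
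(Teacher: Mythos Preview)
Your computations for the $\xi$-direction and for \eqref{dpar1} are exactly what the paper does, and your derivation of the identity
\[
\mathcal{F}\nabla_{X}\mathcal{T}Y+\mathcal{C}h(X,\mathcal{T}Y)=-h(X,Y)
\]
via Lemma~3.5 is correct and is simply a repackaging of the paper's direct calculation of $g(\bar{\nabla}_{X}Y,W)$. The one genuine slip is in your opening criterion: you take ``$\mathcal{D}$ defines a totally geodesic foliation on $M$'' to mean $\nabla_{X}Y\in\Gamma(\mathcal{D})$, i.e.\ the leaves are totally geodesic \emph{in $M$}. Under that reading, condition \eqref{dpar1} alone (together with the automatic $\xi$-orthogonality) already characterises the property, and your own identity shows that \eqref{dpar2} is equivalent to $h(X,Y)=0$, which is logically independent of $\nabla_{X}Y\in\Gamma(\mathcal{D})$. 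So with the intrinsic interpretation the biconditional you are aiming for is false.

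The paper instead works with the extrinsic criterion $\bar{\nabla}_{X}Y\in\Gamma(\mathcal{D})$, testing not only against $\xi$ and $Z\in\Gamma(\mathcal{D}_1\oplus\mathcal{D}_2)$ but also against every $W\in\Gamma(TM)^{\perp}$; that third test is what produces \eqref{dpar2}. Once you replace your opening line by this extrinsic formulation, everything you wrote goes through and matches the paper's argument essentially verbatim, with your use of \eqref{normal} standing in for the paper's direct expansion of $-g(\varphi\bar{\nabla}_{X}\varphi Y,W)$.
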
	

\begin{proof}
The distribution $\mathcal{D}$ defines a totaly geodesic foliation on $M$ if and only if $g(\bar{\nabla}_{X}Y,\xi)=0,$ $g(\bar{\nabla}_{X}Y,Z)=0$ and $g(\bar{\nabla}_{X}Y,W)=0$ for any $X,Y\in\Gamma(\mathcal{D}),$ $Z=\mathcal{Q}Z+\mathcal{R}Z\in\Gamma(\mathcal{D}_1\oplus \mathcal{D}_2)$ and $W\in\Gamma(TM)^\perp.$ Then by using 
\eqref{gphixphiy} and \eqref{nablaxxi}, we obtain
	\begin{equation}\label{dpar}
	g(\bar{\nabla}_{X}Y,\xi)=Xg(Y,\xi)-g(Y,\bar{\nabla}_{X}\xi)=-g(Y,\bar{\nabla}_{X}\xi)=0.
	\end{equation}
On the other hand, using \eqref{gphixphiy}, we find
	\begin{align*}
		g(\bar{\nabla}_{X}Y, Z)&=g(\bar{\nabla}_{X}\varphi Y,\varphi Z)= g(\bar{\nabla}_{X}\mathcal{T}Y,\varphi Z)
 \end{align*}	
here we have used $\mathcal{F}Y=0$ for any $Y\in\Gamma(\mathcal{D}).$ Now, by using  \eqref{vx1} and \eqref{gauss}, we have
	\begin{align}
	g(\bar{\nabla}_{X}Y, Z)&= g({\nabla}_{X}\mathcal{T}Y +h(X,\mathcal{T}Y),\varphi \mathcal{Q}Z+\varphi \mathcal{R}Z)\nonumber\\
		&=g(\nabla_{X}\mathcal{T}Y+h(X,\mathcal{T}Y),\mathcal{T}\mathcal{Q}Z+\mathcal{F}\mathcal{Q}Z+\mathcal{T}\mathcal{R}Z +\mathcal{F}\mathcal{R}Z))\nonumber\\
		&=g(\nabla_{X}\mathcal{T}Y, \mathcal{T}\mathcal{Q}Z+\mathcal{T}\mathcal{R}Z) +g(h(X,\mathcal{T}Y), \mathcal{F}\mathcal{Q}Z+\mathcal{F}\mathcal{R}Z))\nonumber\\
		&=g(\nabla_{X}\mathcal{T}Y,\mathcal{T}Z)+g(h(X,\mathcal{T}Y),\mathcal{F}Z) \label{dpar1-}
	\end{align}
for any $X,Y\in\Gamma(\mathcal{D})$ and $Z=\mathcal{Q}Z+\mathcal{R}Z\in\Gamma(\mathcal{D}_1\oplus \mathcal{D}_2).$  Now, for any $X,Y\in\Gamma(\mathcal{D})$ and $W\in\Gamma(TM)^\perp,$ we have 
\begin{align}
	g(\bar{\nabla}_{X}Y,W)&=-g(\varphi\bar{\nabla}_{X_1}\varphi Y_1, W)=-g(\varphi(\nabla_{X}\mathcal{T}Y+h(X,\mathcal{T}Y)),W))\nonumber\\
	&=-g(\mathcal{T}\nabla_{X}\mathcal{T}Y+\mathcal{F}\nabla_{X}\mathcal{T}Y +\mathcal{B}h(X,\mathcal{T}Y)+\mathcal{C}h(X,\mathcal{T}Y),W))\nonumber\\
	&=-g(\mathcal{F}\nabla_{X}\mathcal{T}Y+\mathcal{C}h(X,\mathcal{T}Y),W) \label{dpar2-}
\end{align}
Thus proof follows \eqref{dpar}, \eqref{dpar1-} and \eqref{dpar2-}.
\end{proof}
\begin{theorem}\label{d1paralel}
	Let $M$ be a quasi bi-slant submanifolds of $\bar{M}.$ The slant distribution $\mathcal{D}_1$ defines totally geodesic foliation on $M$ if and only if 
		\begin{align}
	&g(\mathcal{A}_{\mathcal{F}\mathcal{T}V_1}U_1,Z)-g(\mathcal{A}_{\mathcal{F}V_1}U_1,\mathcal{T}\mathcal{P}Z)
 \nonumber 	\\
	&=g(\mathcal{A}_{\mathcal{F}V_1}U_1,\mathcal{T}\mathcal{R}Z)-g(\nabla^{\perp}_{U_1}\mathcal{F}V_1,\mathcal{F}\mathcal{R}Z) \label{d1par1}
	\end{align}
	and
	\begin{align}\label{d1par2}
	\mathcal{F}\mathcal{A}_{\mathcal{F}V_1}U_1-\nabla^{\perp}_{U_1}\mathcal{F}\mathcal{T}V_1-\mathcal{C}\nabla^{\perp}_{U_1}\mathcal{F}V_1\in\Gamma(TM)
	\end{align}
	for any $X, Y\in\Gamma(\mathcal{D}), \ Z=\mathcal{Q}Z+\mathcal{R}Z\in\Gamma(\mathcal{D}_1\oplus \mathcal{D}_2)$ and $W\in\Gamma(TM)^\perp.$
\end{theorem}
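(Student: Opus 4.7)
The plan is to adapt the template used for Theorem \ref{dparalel}. By definition, the slant distribution $\mathcal{D}_1$ defines a totally geodesic foliation on $M$ (in the ambient sense, as in Theorem \ref{dparalel}) if and only if $\bar{\nabla}_{U_1}V_1\in\Gamma(\mathcal{D}_1)$ for every $U_1,V_1\in\Gamma(\mathcal{D}_1)$. Splitting $TM\oplus T^{\perp}M = \mathcal{D}\oplus\mathcal{D}_1\oplus\mathcal{D}_2\oplus\langle\xi\rangle\oplus T^{\perp}M$, this reduces to checking three orthogonality conditions: $g(\bar{\nabla}_{U_1}V_1,\xi)=0$, $g(\bar{\nabla}_{U_1}V_1,Z)=0$ for $Z=\mathcal{P}Z+\mathcal{R}Z\in\Gamma(\mathcal{D}\oplus\mathcal{D}_2)$, and $g(\bar{\nabla}_{U_1}V_1,W)=0$ for $W\in\Gamma(T^{\perp}M)$. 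The $\xi$-condition is automatic: since $\eta(V_1)=0$ and $\bar{\nabla}_{U_1}\xi=0$ by \eqref{nablaxxi}, one has $g(\bar{\nabla}_{U_1}V_1,\xi)=U_1\eta(V_1)=0$.

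For the second condition, I would proceed exactly as in the proof of Theorem 4.2. Starting from $g(\bar{\nabla}_{U_1}V_1,Z) = g(\varphi\bar{\nabla}_{U_1}V_1,\varphi Z)$ via \eqref{gphixphiy} and $\eta(Z)=0$, apply the cosymplectic identity \eqref{nablaxvarphiy} to pull $\varphi$ inside, split $\varphi V_1=\mathcal{T}V_1+\mathcal{F}V_1$, and for the first piece use the skew-symmetry of $\varphi$ together with $\varphi\mathcal{T}V_1=\mathcal{T}^2V_1+\mathcal{F}\mathcal{T}V_1=-\cos^{2}\theta_1 V_1+\mathcal{F}\mathcal{T}V_1$ from Lemma \ref{lemtkare}(i). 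This produces a $\cos^{2}\theta_1\,g(\bar{\nabla}_{U_1}V_1,Z)$ contribution that I would absorb to the left, leaving a $\sin^{2}\theta_1$ factor on that side. On the right, Weingarten \eqref{weingarten} applied to $\bar{\nabla}_{U_1}\mathcal{F}\mathcal{T}V_1$ and $\bar{\nabla}_{U_1}\mathcal{F}V_1$, together with the expansion $\varphi Z=\mathcal{T}\mathcal{P}Z+\mathcal{T}\mathcal{R}Z+\mathcal{F}\mathcal{R}Z$ (using $\mathcal{F}\mathcal{P}Z=0$ because $\mathcal{D}$ is invariant, and absence of any $\mathcal{Q}Z$ piece), gives precisely the four pairings in \eqref{d1par1}.

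For the third condition, I run the same kind of computation against $W\in\Gamma(T^{\perp}M)$: rewrite $g(\bar{\nabla}_{U_1}V_1,W)=g(\bar{\nabla}_{U_1}\varphi V_1,\varphi W)$ using the cosymplectic condition, decompose $\varphi V_1 = \mathcal{T}V_1+\mathcal{F}V_1$ and $\varphi W=\mathcal{B}W+\mathcal{C}W$ via \eqref{vz}, and again absorb the $\cos^{2}\theta_1$ term produced by Lemma \ref{lemtkare}(i). The tangential parts of Weingarten cancel against $\mathcal{C}W$ (which is normal), and the remaining contributions assemble, after using the skew-symmetry of $\varphi$, into $\sin^{2}\theta_1\,g(\bar{\nabla}_{U_1}V_1,W)=g\!\left(\mathcal{F}\mathcal{A}_{\mathcal{F}V_1}U_1-\nabla^{\perp}_{U_1}\mathcal{F}\mathcal{T}V_1-\mathcal{C}\nabla^{\perp}_{U_1}\mathcal{F}V_1,\,W\right)$. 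Since this quantity is a priori a section of $T^{\perp}M$, requiring it to lie in $\Gamma(TM)$ is equivalent to requiring it to vanish, which is \eqref{d1par2}.

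The main obstacle is purely bookkeeping: one must carefully separate tangential and normal components at each step, remember that $\mathcal{P}Z$ contributes only through $\mathcal{T}\mathcal{P}Z$ (because $\mathcal{D}$ is $\varphi$-invariant) while $\mathcal{R}Z$ contributes through both $\mathcal{T}\mathcal{R}Z$ and $\mathcal{F}\mathcal{R}Z$, and keep track of the sign produced by the skew-symmetry $g(\varphi X,Y)=-g(X,\varphi Y)$ so that the $\sin^{2}\theta_1$ coefficient emerges on the correct side. No genuinely new tool is needed beyond the identities already recorded in the preliminaries and in Lemmas \ref{general}, \ref{lemtkare}.
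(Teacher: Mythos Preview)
Your proposal is correct and follows essentially the same route as the paper: you reduce to the three orthogonality conditions against $\xi$, against $Z=\mathcal{P}Z+\mathcal{R}Z\in\Gamma(\mathcal{D}\oplus\mathcal{D}_2)$, and against $W\in\Gamma(T^{\perp}M)$; you use the cosymplectic identity and \eqref{gphixphiy} to introduce $\varphi$, split via $\mathcal{T},\mathcal{F}$, invoke Lemma~\ref{lemtkare}(i) to extract the $\cos^{2}\theta_1$ term, and then absorb it to produce the $\sin^{2}\theta_1$ factor before applying Weingarten and the decomposition $\varphi Z=\mathcal{T}\mathcal{P}Z+\mathcal{T}\mathcal{R}Z+\mathcal{F}\mathcal{R}Z$. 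Your remark that the expression in \eqref{d1par2} is a priori normal, so that membership in $\Gamma(TM)$ is equivalent to vanishing, is a useful clarification that the paper leaves implicit.
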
	

\begin{proof}
	The distribution $\mathcal{D}_1$ defines a totaly geodesic foliation on $M$ if and only if $g(\bar{\nabla}_{U_1}V_1,\xi )=0$,  $g(\bar{\nabla}_{U_1}V_1,Z)=0$ and $g(\bar{\nabla}_{U_1}V_1,W )=0$, for  any  $U_1,V_1\in\Gamma(\mathcal{D}_1),$ $Z=\mathcal{P}Z+\mathcal{R}Z\in\Gamma(\mathcal{D}_1\oplus \mathcal{D}_2)$ and $W\in\Gamma(TM)^{\perp}.$ Since $M$ is a cosymplectic manifold, we immediately have $g(\bar{\nabla}_{U_1}V_1,\xi )=0.$ Now, for any $U_1,V_1\in\Gamma(\mathcal{D}_1),$ and $Z=\mathcal{P}Z+\mathcal{R}Z\in\Gamma(\mathcal{D}_1\oplus \mathcal{D}_2),$ by using 
	\eqref{gphixphiy} and \eqref{nablaxxi}, we obtain
\begin{align*}
	g(\bar{\nabla}_{U_1}V_1,Z)&=
	-g(\bar{\nabla}_{U_1}\varphi \mathcal{T}V_1,Z) + g(\bar{\nabla}_{U_1} \mathcal{F}V_1, \varphi \mathcal{P}Z+\varphi \mathcal{R}Z).
\end{align*}	
	Now, by using lemma \eqref{lemtkare} (i), we get
	\begin{align*}
	g(\bar{\nabla}_{U_1}V_1,Z)&=
	cos^2\theta_1g(\bar{\nabla}_{U_1} V_1, Z)-g(-\mathcal{A}_{\mathcal{F}\mathcal{T}V_1}U_1+\nabla^{\perp}_{U_1}\mathcal{F}\mathcal{T}V_1,Z)\\
	&+g(-\mathcal{A}_{\mathcal{F}V_1}U_1+\nabla^{\perp}_{U_1}\mathcal{F}V_1,\mathcal{T}\mathcal{P}Z )\\
	&+g(-\mathcal{A}_{\mathcal{F}V_1}U_1 +\nabla^{\perp}_{U_1}\mathcal{F}V_1,\mathcal{T}\mathcal{R}Z +\mathcal{F}\mathcal{R}Z)
	\end{align*}
	or
	\begin{align}
	sin^2\theta_1g(\bar{\nabla}_{U_1}V_1,Z)&=g(\mathcal{A}_{\mathcal{F}\mathcal{T}V_1}U_1,Z)-g(\mathcal{A}_
	{\mathcal{F}V_1}U_1,\mathcal{T}\mathcal{P}Z)\nonumber \\
	&-g(\mathcal{A}_{\mathcal{F}V_1}U_1,\mathcal{T}\mathcal{R}Z)+g(\nabla^{\perp}_{U_1}\mathcal{F}V_1,\mathcal{F}\mathcal{R}Z).\label{d1par} 
\end{align}
Now, for any $U_1,V_1\in\Gamma(\mathcal{D})$ and $W\in\Gamma(TM)^\perp,$ we have 
\begin{align*}
	g(\bar{\nabla}_{U_1}V_1,W)&
	=-g(\bar{\nabla}_{U_1}\varphi \mathcal{T}V_1,W)-g(\varphi(\bar{\nabla}_{U_1}\mathcal{F}V_1),W)\\ &-g(\varphi(-\mathcal{A}_{\mathcal{F}V_1}U_1+\nabla^{\perp}_{U_1}\mathcal{F}V_1),W)\\
	&=\cos^2\theta_1g(\bar{\nabla}_{U_1}V_1,W)-g(-\mathcal{A}_{\mathcal{F}\mathcal{T}V_1}U_1+\nabla^{\perp}_{U_1}\mathcal{F}\mathcal{T}V_1,W)\\
	&-g(-\mathcal{T}\mathcal{A}_{\mathcal{F}V_1}U_1-\mathcal{F}\mathcal{A}_{\mathcal{F}V_1}U_1+
	\mathcal{B}\nabla^{\perp}_{U_1}\mathcal{F}V_1+\mathcal{C}\nabla^{\perp}_{U_1}\mathcal{F}V_1,W)
\end{align*}
or
\begin{align}
\sin^2\theta_1g(\bar{\nabla}_{U_1}V_1,W)&=-g(\nabla^{\perp}_{U_1}\mathcal{F}\mathcal{T}V_1,W)
+g(\mathcal{F}\mathcal{A}_{\mathcal{F}V_1}U_1-\mathcal{C}\nabla^{\perp}_{U_1}\mathcal{F}V_1,W)\nonumber \\
&=g(\mathcal{F}\mathcal{A}_{\mathcal{F}V_1}U_1-\nabla^{\perp}_{U_1}\mathcal{F}\mathcal{T}V_1 -\mathcal{C}\nabla^{\perp}_{U_1}\mathcal{F}V_1,W)\label{d1par1-}
\end{align}
Thus proof follows \eqref{d1par} and \eqref{d1par1-}.
\end{proof}

\begin{theorem}\label{d2paralel}
	Let $M$ be a quasi bi-slant submanifolds of $\bar{M}.$ The slant distribution $\mathcal{D}_2$ defines totally geodesic foliation on $M$ if and only if 
	\begin{align}\label{d2par0}
	\mathcal{T}(\nabla_{U_2}\mathcal{T}V_2-\mathcal{A}_{\mathcal{F}V_2}U_2)\in\Gamma(\mathcal{D}_2),
	\end{align}
	\begin{align}\label{d2par1}
	\mathcal{B}(h(U_2,TV_2)+\nabla^{\perp}_{U_2}\mathcal{F}V_2)\in\Gamma(TM)^{\perp}
	\end{align}
	and
	\begin{align}\label{d2par2}
	g(\nabla^{\perp}_{U_2}\mathcal{F}\mathcal{T}V_2 -\mathcal{F}\mathcal{A}_{V_2}U_2,W)=g(\nabla^{\perp}_{U_2}\mathcal{F}V_2,\mathcal{C}W)
	\end{align}
	for any $U_2, V_2\in\Gamma(\mathcal{D}_2), \ Z=\mathcal{P}Z+\mathcal{Q}Z\in\Gamma(\mathcal{D}\oplus \mathcal{D}_1)$ and $W\in\Gamma(TM)^\perp.$
\end{theorem}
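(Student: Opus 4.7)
The plan is to follow the template of Theorems \ref{dparalel} and \ref{d1paralel}. The distribution $\mathcal{D}_2$ defines a totally geodesic foliation on $M$ exactly when $\bar{\nabla}_{U_2}V_2\in\Gamma(\mathcal{D}_2)$ for all $U_2,V_2\in\Gamma(\mathcal{D}_2)$, which by the decomposition $T\bar{M}|_M=\mathcal{D}\oplus\mathcal{D}_1\oplus\mathcal{D}_2\oplus\langle\xi\rangle\oplus T^\perp M$ splits into checking $g(\bar{\nabla}_{U_2}V_2,\cdot)=0$ against $\xi$, against $X\in\Gamma(\mathcal{D})$, against $Y\in\Gamma(\mathcal{D}_1)$, and against $W\in\Gamma(T^\perp M)$. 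The $\xi$ case is immediate from \eqref{nablaxxi}.

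The key identity for the other three cases comes from $\varphi\bar{\nabla}_{U_2}V_2=\bar{\nabla}_{U_2}\varphi V_2$ (cosymplectic \eqref{nablaxvarphiy}). Expanding $\varphi V_2=\mathcal{T}V_2+\mathcal{F}V_2$, using Gauss \eqref{gauss}, Weingarten \eqref{weingarten}, together with \eqref{vx} and \eqref{vz}, and then comparing tangential and normal parts gives
\begin{align*}
\mathcal{T}\nabla_{U_2}V_2+\mathcal{B}h(U_2,V_2)&=\nabla_{U_2}\mathcal{T}V_2-\mathcal{A}_{\mathcal{F}V_2}U_2,\\
\mathcal{F}\nabla_{U_2}V_2+\mathcal{C}h(U_2,V_2)&=h(U_2,\mathcal{T}V_2)+\nabla^\perp_{U_2}\mathcal{F}V_2.
\end{align*}
Applying $\mathcal{T}$ to the first identity and using that $\mathcal{T}|_{\mathcal{D}}=\varphi|_{\mathcal{D}}$ is an isomorphism while $\mathcal{T}|_{\mathcal{D}_1}$ is injective (Lemma \ref{general}(a), since $\theta_1\ne\pi/2$), the vanishing of the $\mathcal{D}$- and $\mathcal{D}_1$-components of $\nabla_{U_2}V_2$ is captured precisely by condition \eqref{d2par0}. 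Applying $\mathcal{B}$ to the second identity and using parts (c) and (d) of Lemma \ref{general} encodes the corresponding normal-bundle information as \eqref{d2par1}.

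For \eqref{d2par2}, I pair $\bar{\nabla}_{U_2}V_2$ against $W\in\Gamma(T^\perp M)$. Since $\eta(\bar{\nabla}_{U_2}V_2)=\eta(W)=0$, equation \eqref{gphixphiy} gives $g(\bar{\nabla}_{U_2}V_2,W)=g(\varphi\bar{\nabla}_{U_2}V_2,\varphi W)$, and inserting cosymplectic once more followed by Gauss--Weingarten, and then applying Lemma \ref{lemtkare}(i) to rewrite $\varphi\mathcal{T}V_2=-\cos^2\theta_2\,V_2+\mathcal{F}\mathcal{T}V_2$, reproduces the $\sin^2\theta_2$ trick used in the proof of Theorem \ref{d1paralel}: a $\cos^2\theta_2\,g(\bar{\nabla}_{U_2}V_2,W)$ term transposes to the left and leaves the identity stated in \eqref{d2par2}. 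The main obstacle I anticipate is the bookkeeping in this last step---specifically, separating the pieces of $\varphi W=\mathcal{B}W+\mathcal{C}W$ and verifying that the contributions paired with $\mathcal{B}W$ collapse into the tangential information already handled by \eqref{d2par0}--\eqref{d2par1}, so that only the stated contraction with $\mathcal{C}W$ survives on the right-hand side.
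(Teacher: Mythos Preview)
The paper actually omits the proof of this theorem entirely; it is stated immediately after Theorem~\ref{d1paralel} and the text then passes directly to the decomposition theorem. So there is no explicit argument to compare against, and your plan to mimic the proofs of Theorems~\ref{dparalel} and \ref{d1paralel} is exactly the intended route.

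Your treatment of \eqref{d2par2} via the $\sin^2\theta_2$ trick is correct and line-for-line parallel to the computation of \eqref{d1par1-} in the paper; that part goes through without difficulty (note only that the paper's $\mathcal{A}_{V_2}$ in \eqref{d2par2} is evidently a misprint for $\mathcal{A}_{\mathcal{F}V_2}$, since $V_2$ is tangent).

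There is, however, a genuine gap in your handling of \eqref{d2par0} and \eqref{d2par1}. You claim that applying $\mathcal{T}$ to the tangential identity shows \eqref{d2par0} is \emph{precisely} the vanishing of the $\mathcal{D}$- and $\mathcal{D}_1$-components of $\nabla_{U_2}V_2$. But the identity reads
\[
\mathcal{T}\bigl(\nabla_{U_2}\mathcal{T}V_2-\mathcal{A}_{\mathcal{F}V_2}U_2\bigr)=\mathcal{T}^2\nabla_{U_2}V_2+\mathcal{T}\mathcal{B}h(U_2,V_2),
\]
and the term $\mathcal{T}\mathcal{B}h(U_2,V_2)$ has, in general, a nonzero $\mathcal{D}_1$-component (since $\mathcal{B}(\mathcal{F}\mathcal{D}_1)=\mathcal{D}_1$). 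So \eqref{d2par0} by itself does not isolate the $\mathcal{D}_1$-part of $\nabla_{U_2}V_2$; it mixes it with second-fundamental-form data. Similarly, if $\bar\nabla_{U_2}V_2\in\mathcal{D}_2$ then $\mathcal{B}\bigl(h(U_2,\mathcal{T}V_2)+\nabla^\perp_{U_2}\mathcal{F}V_2\bigr)=\mathcal{B}\mathcal{F}\nabla_{U_2}V_2=-\sin^2\theta_2\,\nabla_{U_2}V_2$, which lies in $\mathcal{D}_2$, not in $\Gamma(TM)^\perp$; so \eqref{d2par1} as literally printed cannot be a necessary condition, and your description of it as ``normal-bundle information'' does not address this. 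The correct bookkeeping requires treating \eqref{d2par0}--\eqref{d2par2} as a coupled system (or recognising a typo in \eqref{d2par1}), rather than assigning each condition cleanly to one summand of $T\bar M$.
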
	

 From theorem \eqref{dparalel}, \eqref{d1paralel} and \eqref{d2paralel}, we have the following decomposition theorem:
 \begin{theorem}
 	Let $M$ be a proper quasi bi-slant submanifolds of a cosmyplectic manifold $\bar{M}.$ Then $M$ is a local product Riemannian manifold of the form $M_{\mathcal{D}}\times M_{\mathcal{D}_1}\times M_{\mathcal{D}_2},$ where $M_{\mathcal{D}},$ $M_{\mathcal{D}_1}$ and $M_{\mathcal{D}_2}$ are leaves of $\mathcal{D},$ $\mathcal{D}_1$ and $\mathcal{D}_2,$ recpectively, if and only if the conditions \eqref{dpar1}, \eqref{dpar2}, \eqref{d1par1}, \eqref{d1par2}, \eqref{d2par0}, \eqref{d2par1} and \eqref{d2par2} hold.
 \end{theorem}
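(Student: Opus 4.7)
The plan is to obtain this decomposition theorem as an immediate consequence of the three preceding characterizations (Theorems \ref{dparalel}, \ref{d1paralel}, \ref{d2paralel}) combined with the classical de Rham--type splitting principle. Since the three conditions listed in each of those theorems have already been shown to be necessary and sufficient for the corresponding distribution to define a totally geodesic foliation, essentially no new local computation is required; what remains is to package this information geometrically.

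For the ``if'' direction, I would argue as follows. Conditions \eqref{dpar1}--\eqref{dpar2} together with Theorem \ref{dparalel} guarantee that the invariant distribution $\mathcal{D}$ defines a totally geodesic foliation on $M$. Likewise, \eqref{d1par1}--\eqref{d1par2} with Theorem \ref{d1paralel} yield that $\mathcal{D}_{1}$ is integrable with totally geodesic leaves, and \eqref{d2par0}--\eqref{d2par2} with Theorem \ref{d2paralel} give the same for $\mathcal{D}_{2}$. Since the orthogonal decomposition $TM=\mathcal{D}\oplus\mathcal{D}_{1}\oplus\mathcal{D}_{2}\oplus\langle\xi\rangle$ holds by Definition \ref{def1}, and since $\nabla_{X}\xi=0$ by \eqref{nablaxxi} forces the $\xi$-line field to be parallel (hence to integrate to totally geodesic curves that split off trivially), we are in the situation where $M$ admits an orthogonal decomposition of its tangent bundle into mutually orthogonal integrable distributions whose leaves are all totally geodesic. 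The local de Rham decomposition theorem then provides, around each point of $M$, an isometry with a Riemannian product of the leaves, which after absorbing the trivial $\xi$-factor yields the desired local splitting $M_{\mathcal{D}}\times M_{\mathcal{D}_{1}}\times M_{\mathcal{D}_{2}}$.

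For the converse, suppose $M$ is locally isometric to the product $M_{\mathcal{D}}\times M_{\mathcal{D}_{1}}\times M_{\mathcal{D}_{2}}$ with $M_{\mathcal{D}},M_{\mathcal{D}_{1}},M_{\mathcal{D}_{2}}$ the leaves of $\mathcal{D},\mathcal{D}_{1},\mathcal{D}_{2}$ respectively. Then each factor is totally geodesic in $M$, so each of the distributions $\mathcal{D},\mathcal{D}_{1},\mathcal{D}_{2}$ defines a totally geodesic foliation, and invoking Theorems \ref{dparalel}, \ref{d1paralel}, \ref{d2paralel} one more time in the reverse direction forces the seven listed conditions to hold.

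The only real subtlety I expect to encounter is bookkeeping for the Reeb direction: the tangent bundle has four summands while the stated product has three factors, so the $\xi$-line must be absorbed into one of the leaves. This is justified precisely by the cosymplectic identity $\nabla_{X}\xi=0$, which makes $\langle\xi\rangle$ a parallel line subbundle and guarantees that incorporating it into any of the three leaves preserves the totally geodesic product structure. Apart from this point, the argument is essentially an assembly of the preceding lemmas plus the de Rham principle, with no further nontrivial computation.
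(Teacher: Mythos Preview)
Your proposal is correct and matches the paper's own approach: the paper simply states that the decomposition theorem follows from Theorems \ref{dparalel}, \ref{d1paralel} and \ref{d2paralel} and gives no further argument, so your invocation of those three characterizations together with the de Rham splitting principle is exactly what is intended. Your added remark about absorbing the $\xi$-direction via \eqref{nablaxxi} is a welcome clarification that the paper leaves implicit.
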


\section{Quasi bi-slant submanifolds with parallel canonical structures}

In this section, we obtain some results for the quasi bi-slant submanifolds with parallel canonical structure.
Let $M$ be a proper quasi bi-slant submanifold of a cosymplectic manifold $\bar{M}.$ Then we define

\begin{equation}\label{z1tz2}
(\bar{\nabla}_{Z_1}\mathcal{T})Z_2=\nabla_{Z_1}\mathcal{T}Z_2-\mathcal{T}\nabla_{Z_1}Z_2
\end{equation}
\begin{equation}\label{z1fz2}
(\bar{\nabla}_{Z_1}\mathcal{F})Z_2=\nabla^{\perp}_{Z_1}\mathcal{F}Z_2-\mathcal{F}\nabla_{Z_1}Z_2
\end{equation}
\begin{equation}\label{z1bz2}
(\bar{\nabla}_{Z_1}\mathcal{B})W_1=\nabla_{Z_1}\mathcal{B}W_1-\mathcal{B}\nabla^{\perp}_{Z_1}W_1
\end{equation}
\begin{equation}\label{z1cz2}
(\bar{\nabla}_{Z_1}\mathcal{C})W_1=\nabla^{\perp}_{Z_1}\mathcal{C}W_1-\mathcal{C}\nabla^{\perp}_{Z_1}W_1
\end{equation}
where $Z_1, Z_2 \in\Gamma(TM)$ and $W_1\in\Gamma(TM)^{\perp}$.\\
Then, the endomorphism $\mathcal{T}$ $(resp. ~~ \mathcal{F})$ and the endomorphism $\mathcal{B}$ $(resp. ~~ \mathcal{C})$ are parallel if $\bar{\nabla}\mathcal{T}\equiv 0$ $(resp.~~ \bar{\nabla}\mathcal{F} \equiv 0)$ and $\bar{\nabla}\mathcal{B}\equiv0$ $(resp.~~ \bar{\nabla}\mathcal{C}\equiv0),$ respectively.\\
Taking into account of \eqref{teget}, \eqref{normal}, \eqref{teget1}, \eqref{normal1} and \eqref{z1tz2}-\eqref{z1cz2}, we have the following lemma.

\begin{lemma}
	Let $M$ be a quasi bi-slant submanifold of a cosymplectic manifold $\bar{M}$. Then for any $Z_1, Z_2\in\Gamma(TM)$ and $W_1 \in \Gamma(TM)^{\perp}$ we obtain	
	\begin{equation}\label{z1tz21}
	(\bar{\nabla}_{Z_1}\mathcal{T})Z_2=\mathcal{A}_{\mathcal{F}Z_2}Z_1 +\mathcal{B}h(Z_1,Z_2)
	\end{equation}
	\begin{equation}\label{z1fz21}
	(\bar{\nabla}_{Z_1}\mathcal{F})Z_2=\mathcal{C}h(Z_1,Z_2)-h(Z_1,\mathcal{T}Z_2)
	\end{equation}
	\begin{equation}\label{z1bz21}
	(\bar{\nabla}_{Z_1}\mathcal{B})W_1=\mathcal{A}_{\mathcal{C}W_1}Z_1 +\mathcal{T}A_{W_1}Z_1
	\end{equation}
	\begin{equation}\label{z1cz21}
	(\bar{\nabla}_{Z_1}\mathcal{C})W_1=-\mathcal{F}\mathcal{A}_{W_1}Z_1 - h(Z_1,\mathcal{B}W_1).
	\end{equation}
\end{lemma}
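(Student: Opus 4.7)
The plan is to observe that each of the four identities in the lemma is obtained by directly substituting the definition of the associated covariant derivative (from \eqref{z1tz2}--\eqref{z1cz2}) into the corresponding structural identity that was already established earlier in the excerpt. In other words, no new computation is needed; the lemma amounts to a relabeling of the previously derived formulas \eqref{teget}, \eqref{normal}, \eqref{teget1} and \eqref{normal1}.

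Concretely, to verify \eqref{z1tz21} I would start from the definition
\[
(\bar{\nabla}_{Z_1}\mathcal{T})Z_2 = \nabla_{Z_1}\mathcal{T}Z_2 - \mathcal{T}\nabla_{Z_1}Z_2,
\]
and observe that its right-hand side is exactly the left-hand side of \eqref{teget}; this instantly gives $\mathcal{A}_{\mathcal{F}Z_2}Z_1 + \mathcal{B}h(Z_1,Z_2)$. The remaining three equations are handled in the same fashion: plug the definition \eqref{z1fz2} into \eqref{normal} to obtain \eqref{z1fz21}; plug \eqref{z1bz2} into \eqref{teget1} to obtain \eqref{z1bz21}; and plug \eqref{z1cz2} into \eqref{normal1} to obtain \eqref{z1cz21}.

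The only genuine item to be careful with is matching sign conventions and the domain of each operator (tangential versus normal slots for $Z_2$ in the first two identities versus $W_1\in\Gamma(TM)^{\perp}$ in the last two), but once the correct pairing of definition with identity is made the four equalities fall out immediately. Since the computational content already resides in the derivations of \eqref{teget}--\eqref{normal1} via the cosymplectic condition \eqref{nablaxvarphiy} together with the Gauss--Weingarten formulas \eqref{gauss}--\eqref{weingarten} and the decompositions \eqref{vx}, \eqref{vz}, the step most worth double-checking here is simply the bookkeeping; there is no substantive obstacle to overcome.
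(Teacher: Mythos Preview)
Your proposal is correct and matches the paper's own approach exactly: the paper does not give a separate proof but simply states that the lemma follows by combining \eqref{teget}, \eqref{normal}, \eqref{teget1}, \eqref{normal1} with the definitions \eqref{z1tz2}--\eqref{z1cz2}, which is precisely what you describe. Your caution about sign bookkeeping is well placed --- note that substituting \eqref{z1bz2} into \eqref{teget1} actually yields $\mathcal{A}_{\mathcal{C}W_1}Z_1 - \mathcal{T}\mathcal{A}_{W_1}Z_1$, so the $+$ sign in the stated \eqref{z1bz21} is a typo in the paper (the minus sign is in fact what the paper uses later in the proof of Theorem~5.4).
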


First, we have the following theorem:
\begin{theorem}
	Let $M$ be a quasi bi-slant submanifold of a cosymplectic manifold $\bar{M}$. Then, $\mathcal{T}$ is parallel if and only if the invariant distribution $\mathcal{D}$ is totally geodesic.
\end{theorem}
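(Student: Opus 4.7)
The plan is to start from identity~\eqref{z1tz21}, namely $(\bar{\nabla}_{Z_1}\mathcal{T})Z_2 = \mathcal{A}_{\mathcal{F}Z_2}Z_1 + \mathcal{B}h(Z_1,Z_2)$, which holds for all $Z_1,Z_2 \in \Gamma(TM)$. Parallelism of $\mathcal{T}$ means the left-hand side vanishes identically, so the proof reduces to analyzing when the right-hand side vanishes, with an emphasis on pairs drawn from $\mathcal{D}$.

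For the implication $\mathcal{T}$ parallel $\Rightarrow$ $\mathcal{D}$ totally geodesic, I would specialize $Z_1=X$, $Z_2=Y \in \Gamma(\mathcal{D})$. Since $\mathcal{D}$ is $\varphi$-invariant, $\mathcal{F}Y = 0$, so \eqref{z1tz21} collapses to $\mathcal{B}h(X,Y)=0$. Together with Lemma~\ref{general}\,(c),(d) and the fact that $\mathcal{B}$ vanishes on the $\varphi$-invariant part $\mu$, this places $h(X,Y)$ into $\Gamma(\mu)$. Then using the equivalent form \eqref{z1tz2}, $\nabla_X\mathcal{T}Y = \mathcal{T}\nabla_X Y$, I would decompose $\nabla_X Y$ via \eqref{x} and pair the equation against arbitrary $Z \in \mathcal{D}_1\oplus \mathcal{D}_2$ and $W \in T^\perp M$ to recover conditions \eqref{dpar1} and \eqref{dpar2} of Theorem~\ref{dparalel}, which are precisely the characterization of $\mathcal{D}$ as a totally geodesic foliation.

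For the converse, I would begin with the totally-geodesic assumption, which via Theorem~\ref{dparalel} together with the automatic identity $g(\bar{\nabla}_X Y,\xi)=0$ supplied by \eqref{nablaxxi} forces both $h(X,Y)\in\Gamma(\mu)$ and $\nabla_X Y\in\Gamma(\mathcal{D})$ for all $X,Y\in\Gamma(\mathcal{D})$. Substituting into \eqref{z1tz21} kills both summands when $Z_1,Z_2\in\Gamma(\mathcal{D})$. The remaining cases, where at least one argument lies in $\mathcal{D}_1\oplus\mathcal{D}_2\oplus\langle\xi\rangle$, would follow by $C^\infty(M)$-bilinearity of $\bar{\nabla}\mathcal{T}$ combined with the orthogonal decompositions \eqref{vtm}, \eqref{tdikm} and the slant identities of Lemma~\ref{lemtkare}. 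The principal obstacle is the asymmetry of the two hypotheses, parallelism being a condition on \emph{all} $(Z_1,Z_2)\in TM\times TM$ while the totally geodesic foliation hypothesis only controls pairs inside $\mathcal{D}$; bridging this gap cleanly will require careful bookkeeping with the canonical splitting $TM=\mathcal{D}\oplus\mathcal{D}_1\oplus\mathcal{D}_2\oplus\langle\xi\rangle$ and the $\mathcal{B}\mathcal{F}$-identities of Lemma~\ref{general}.
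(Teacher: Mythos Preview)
Your opening move is exactly the paper's entire proof: specialize \eqref{z1tz21} to $X,Y\in\Gamma(\mathcal{D})$, use $\mathcal{F}Y=0$ to kill the shape-operator term, and arrive at
\[
(\bar{\nabla}_X\mathcal{T})Y=\mathcal{B}h(X,Y).
\]
The paper stops right there and declares that ``our assertion comes from'' this identity. It never invokes Theorem~\ref{dparalel}, never decomposes $\nabla_X Y$ along \eqref{x}, and never touches the $\mathcal{B}\mathcal{F}$-identities; the whole proof is two lines.

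Everything you write after that first step is additional work the paper does not do. In particular, the ``principal obstacle'' you correctly flag---that parallelism of $\mathcal{T}$ is a condition on \emph{all} pairs $(Z_1,Z_2)\in TM\times TM$, while the totally-geodesic hypothesis only controls pairs in $\mathcal{D}\times\mathcal{D}$---is simply not addressed in the paper. Nor does the paper explain why $\mathcal{B}h(X,Y)=0$ should force $h(X,Y)=0$ rather than merely $h(X,Y)\in\Gamma(\mu)$. So the gap you are worrying about is real, and it is a gap in the paper's argument as well; you have not missed a trick that the authors supply. If you want to match the paper, you may stop at the displayed identity above; if you want a rigorous proof of the biconditional as literally stated, the extra bookkeeping you outline (or a reinterpretation of what ``$\mathcal{T}$ parallel'' and ``$\mathcal{D}$ totally geodesic'' are meant to quantify over) is genuinely needed.
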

\begin{proof}
	For any $X,Y\in\Gamma(\mathcal{D}),$ from \eqref{z1tz21}, we have 
	\begin{equation}\label{nxty}
	(\bar{\nabla}_X\mathcal{T})Y=\mathcal{B}h(X,Y)
	\end{equation} 
 here we have used $\mathcal{A}_{\mathcal{F}Y}X=0$ since $\mathcal{F}Y=0$ for any $Y\in\Gamma(\mathcal{D})$. Thus, our assertion comes from  \eqref{nxty}.
\end{proof}
\begin{theorem}
	Let $M$ be a quasi bi-slant submanifold of a cosymplectic manifold $\bar{M}$. Then if $\mathcal{F}$ is parallel if and only if
	\begin{equation}\label{acvz2z1}
	g(\mathcal{A}_{\mathcal{C}V}Z_2,Z_1)=-g(\mathcal{A}_VZ_1,\mathcal{T}Z_2)
	\end{equation}
	for any $Z_1, Z_2\in\Gamma(TM)$ and $V\in\Gamma(TM)^{\perp}.$
\end{theorem}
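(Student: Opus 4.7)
The plan is to work directly from equation \eqref{z1fz21} in the preceding lemma, which expresses the covariant derivative of $\mathcal{F}$ as
$$(\bar{\nabla}_{Z_1}\mathcal{F})Z_2 = \mathcal{C}h(Z_1,Z_2) - h(Z_1,\mathcal{T}Z_2).$$
So $\mathcal{F}$ being parallel is equivalent to the normal-bundle identity $\mathcal{C}h(Z_1,Z_2) = h(Z_1,\mathcal{T}Z_2)$. To compare this with the claimed scalar identity, I would pair both sides with an arbitrary $V\in\Gamma(T^{\perp}M)$; parallelism then amounts to
$$g(\mathcal{C}h(Z_1,Z_2),V) = g(h(Z_1,\mathcal{T}Z_2),V) \quad\text{for all } Z_1,Z_2,V.$$

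Next I would convert the left-hand side into an expression involving $\mathcal{A}_{\mathcal{C}V}$. The key ingredient is the skew-symmetry $g(\varphi X, Y) = -g(X,\varphi Y)$, which follows from \eqref{gphixphiy} by replacing $Y$ with $\varphi Y$ and using $\varphi^2 = -I + \eta\otimes\xi$ together with $\eta\circ\varphi = 0$. Since $\mathcal{B}h(Z_1,Z_2)$ is tangential and $V$ is normal, $g(\mathcal{C}h(Z_1,Z_2),V) = g(\varphi h(Z_1,Z_2),V)$, and by skew-symmetry this equals $-g(h(Z_1,Z_2),\varphi V)$. Splitting $\varphi V = \mathcal{B}V + \mathcal{C}V$ and discarding the tangential piece gives $g(\mathcal{C}h(Z_1,Z_2),V) = -g(h(Z_1,Z_2),\mathcal{C}V)$. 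Applying \eqref{ghxyv} yields $-g(\mathcal{A}_{\mathcal{C}V}Z_1,Z_2) = -g(\mathcal{A}_{\mathcal{C}V}Z_2,Z_1)$, using self-adjointness of the shape operator.

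For the right-hand side, \eqref{ghxyv} immediately gives $g(h(Z_1,\mathcal{T}Z_2),V) = g(\mathcal{A}_V Z_1, \mathcal{T}Z_2)$. Combining the two, parallelism of $\mathcal{F}$ is equivalent to
$$-g(\mathcal{A}_{\mathcal{C}V}Z_2,Z_1) = g(\mathcal{A}_V Z_1, \mathcal{T}Z_2) \quad\text{for all } Z_1,Z_2 \in \Gamma(TM),\ V\in\Gamma(T^{\perp}M),$$
which is precisely \eqref{acvz2z1}. Conversely, if \eqref{acvz2z1} holds for all $V$, then the normal vector $\mathcal{C}h(Z_1,Z_2) - h(Z_1,\mathcal{T}Z_2)$ is orthogonal to every $V\in\Gamma(T^{\perp}M)$ and hence vanishes, so $(\bar{\nabla}_{Z_1}\mathcal{F})Z_2 = 0$.

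The argument is essentially a tensorial computation; the only mild subtlety is the correct use of skew-symmetry of $\varphi$ with respect to $g$ (which must be deduced from \eqref{gphixphiy} rather than assumed) and the careful bookkeeping of tangential/normal components when expanding $\varphi V$ and $\varphi h(Z_1,Z_2)$. No integrability or distribution-splitting issues enter, so there is no serious obstacle beyond this.
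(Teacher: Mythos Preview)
Your proposal is correct and follows essentially the same route as the paper: start from \eqref{z1fz21}, pair with an arbitrary normal $V$, use the $g$-skew-symmetry of $\varphi$ to move $\mathcal{C}$ across, and then invoke \eqref{ghxyv} to convert $h$ into $\mathcal{A}$. The only cosmetic difference is that the paper rewrites $g(h(Z_1,\mathcal{T}Z_2),V)$ via Gauss/Weingarten as $-g(\mathcal{T}Z_2,\bar{\nabla}_{Z_1}V)=g(\mathcal{T}Z_2,\mathcal{A}_V Z_1)$, whereas you apply \eqref{ghxyv} directly; your treatment of the converse is also more explicit than the paper's.
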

\begin{proof}
	Assume that $F$ is parallel. Now, from \eqref{z1fz21}, we have 
	\begin{equation}
	(\bar{\nabla}_{Z_1}\mathcal{F})Z_2=\mathcal{C}h(Z_1,Z_2)-h(Z_1,\mathcal{T}Z_2).
	\end{equation}
	Now, taking inner product with $V\in\Gamma(TM)^{\perp}$ in the above equation and using \eqref{gauss}, we obtain
	\begin{align*}
		g((\bar{\nabla}_{Z_1}\mathcal{F})Z_2,V)&=g(\mathcal{C}h(Z_1,Z_2)-h(Z_1,\mathcal{T}Z_2),V)\\
		&=g(\mathcal{C}h(Z_1,Z_2),V)-g(h(Z_1,\mathcal{T}Z_2),V)\\
		&=-g(h(Z_1,Z_2),\varphi V)-g(\bar{\nabla}_{Z_1}\mathcal{T}Z_2,V)\\
		&=-g(\mathcal{A}_{\mathcal{C}V}Z_2,Z_1)
		+g(\mathcal{T}Z_2,\bar{\nabla}_{Z_1}V)\\
		&=-g(\mathcal{A}_{\mathcal{C}V}Z_2,Z_1)+g(\mathcal{T}Z_2,-\mathcal{A}_VZ)
	\end{align*}
	which gives the assertion.
\end{proof}
\begin{theorem}
Let $M$ be a quasi bi-slant submanifold of a cosymplectic manifold $\bar{M}$. Then $\mathcal{F}$ is parallel if and only if $\mathcal{B}$ is parallel. 
\end{theorem}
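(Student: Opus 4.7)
The plan is to exploit the duality between $\mathcal{F}$ and $\mathcal{B}$ coming from the skew-symmetry of $\varphi$ with respect to $g$. The key preliminary observation I want to establish is that for any $X\in\Gamma(TM)$ and $V\in\Gamma(T^{\perp}M)$,
\begin{equation*}
g(\mathcal{F}X,V)=-g(X,\mathcal{B}V),
\end{equation*}
and analogously $g(\mathcal{C}V,W)=-g(V,\mathcal{C}W)$ for $V,W\in\Gamma(T^{\perp}M)$, together with $g(\mathcal{T}X,Y)=-g(X,\mathcal{T}Y)$ for $X,Y\in\Gamma(TM)$. These identities all follow from the single fact $g(\varphi X,Y)=-g(X,\varphi Y)$, which in turn is a direct consequence of \eqref{gphixphiy} (replace $Y$ by $\varphi Y$ and use $\eta\circ\varphi=0$) combined with splitting each vector into its tangential and normal parts with respect to $M$.

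Once this duality is in hand, I would compute the two quantities $g\bigl((\bar{\nabla}_{Z_1}\mathcal{F})Z_2,W_1\bigr)$ and $g\bigl(Z_2,(\bar{\nabla}_{Z_1}\mathcal{B})W_1\bigr)$ using formulas \eqref{z1fz21} and \eqref{z1bz21} respectively, together with \eqref{ghxyv}. The first expands as
\begin{equation*}
g\bigl(\mathcal{C}h(Z_1,Z_2),W_1\bigr)-g\bigl(h(Z_1,\mathcal{T}Z_2),W_1\bigr)=-g\bigl(h(Z_1,Z_2),\mathcal{C}W_1\bigr)-g\bigl(\mathcal{A}_{W_1}Z_1,\mathcal{T}Z_2\bigr),
\end{equation*}
while the second, after moving $\mathcal{A}_{\mathcal{C}W_1}$ across via \eqref{ghxyv} and using the skew-symmetry of $\mathcal{T}$, becomes
\begin{equation*}
g\bigl(h(Z_1,Z_2),\mathcal{C}W_1\bigr)+g\bigl(\mathcal{A}_{W_1}Z_1,\mathcal{T}Z_2\bigr).
\end{equation*}
Adding these gives zero, which yields the pointwise identity
\begin{equation*}
g\bigl((\bar{\nabla}_{Z_1}\mathcal{F})Z_2,W_1\bigr)=-g\bigl(Z_2,(\bar{\nabla}_{Z_1}\mathcal{B})W_1\bigr)
\end{equation*}
for all $Z_1,Z_2\in\Gamma(TM)$ and $W_1\in\Gamma(T^{\perp}M)$.

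From this bilinear identity the equivalence is immediate: if $\bar{\nabla}\mathcal{F}\equiv 0$, then the right-hand side vanishes for every $Z_2$, and since $g$ is non-degenerate on $TM$ we deduce $(\bar{\nabla}_{Z_1}\mathcal{B})W_1=0$; conversely, if $\bar{\nabla}\mathcal{B}\equiv 0$ the left-hand side vanishes for every $W_1$, and non-degeneracy of $g$ on $T^{\perp}M$ forces $(\bar{\nabla}_{Z_1}\mathcal{F})Z_2=0$. I expect the only real subtlety to lie in the second step, namely keeping track of signs and correctly matching each tangential/normal component when applying the skew-symmetries of $\mathcal{T}$, $\mathcal{F}$, $\mathcal{B}$, $\mathcal{C}$; everything else is formal. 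In particular, one must be careful that the right-hand side of \eqref{z1bz21} is used with the sign consistent with \eqref{teget1}, so that the two expressions above are genuinely negatives of one another.
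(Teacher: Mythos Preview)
Your proof is correct and follows essentially the same route as the paper: both arguments establish the identity $g\bigl((\bar{\nabla}_{Z_1}\mathcal{F})Z_2,W_1\bigr)=-g\bigl((\bar{\nabla}_{Z_1}\mathcal{B})W_1,Z_2\bigr)$ by expanding via \eqref{z1fz21}, using the skew-symmetry of $\mathcal{C}$ and $\mathcal{T}$ together with \eqref{ghxyv}, and then recognizing the result as $(\bar{\nabla}_{Z_1}\mathcal{B})W_1$. Your caution about the sign in \eqref{z1bz21} is well placed: the paper's own proof tacitly uses the form $\mathcal{A}_{\mathcal{C}W_1}Z_1-\mathcal{T}\mathcal{A}_{W_1}Z_1$ coming from \eqref{teget1}, not the ``$+$'' printed in \eqref{z1bz21}.
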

\begin{proof}
	By using \eqref{gauss}, \eqref{z1fz21} and \eqref{z1bz21}, we get
	\begin{align*}
		g((\bar{\nabla}_{Z_1}\mathcal{F})Z_2,W_1)&=g(\mathcal{C}h(Z_1,Z_2),W_1)-g(h(Z_1,\mathcal{T}Z_2),W_1)\\
		&=-g(h(Z_1,Z_2),\mathcal{C}W_1)-g(\mathcal{A}_{W_1}Z_1,\mathcal{T}Z_2)\\
		&=-g(\mathcal{A}_{\mathcal{C}W_1}Z_1,Z_2)+g(\mathcal{T}\mathcal{A}_{W_1}Z_1,Z_2)\\
		&=-g(\mathcal{A}_{\mathcal{C}W_1}Z_1-\mathcal{T}\mathcal{A}_{W_1}Z_1,Z_2)\\
		&=-g((\bar{\nabla}_{Z_1}\mathcal{B})W_1,Z_2)
	\end{align*}
	for any $Z_1, Z_2\in\Gamma(TM)$ and $W_1\in\Gamma(TM)^{\perp}$. This proves our assertion.
\end{proof}

Finally, we mention another non-trivial example of quasi bi-slant submanifold of a cosymplectic manifold.
\begin{example}
	Let $M$ be a submanifold of $\mathbb{R}^{11}$ defined by
	$$x(u,v,t,r,s,k,z)=(u,v,t,\frac{1}{\sqrt{2}}r,\frac{1}{\sqrt{2}}r,0,s,k\cos\alpha ,k\sin\alpha ,0,z).$$
	We can easily to see that the tangent bundle of $M$ is spanned by the tangent
	vectors
	$$e_1=\frac{\partial}{\partial x_1}, e_2=\frac{\partial}{\partial y_1}, e_3=\frac{\partial}{\partial x_2}, e_4=\frac{1}{\sqrt{2}}\frac{\partial}{\partial y_2}+\frac{1}{\sqrt{2}}\frac{\partial}{\partial x_3},$$
	$$e_5=\frac{\partial}{\partial x_4}, e_6=\cos\alpha\frac{\partial}{\partial y_4}+\sin\alpha\frac{\partial}{\partial x_5}
	,e_7=\frac{\partial}{\partial z}=\xi.$$
We define the almost contact structurev $\varphi$ of $\mathbb{R}^{11},$ by
$$\varphi(\frac{\partial}{\partial x_i})=\frac{\partial}{\partial y_i},\ \  \varphi(\frac{\partial}{\partial y_j})=-\frac{\partial}{\partial x_j},\ \  \varphi(\frac{\partial}{\partial z})=0,\ \ 1\leq i,j\leq 5.$$
For any vector field $Z=\lambda_i \frac{\partial}{\partial x_i}+\mu_j \frac{\partial}{\partial y_j}+\nu\frac{\partial}{\partial z}\in\Gamma(T\mathbb{R}^{11}),$ then we have
$$g(Z,Z)=\lambda^{2}_i+\mu^{2}_j+\nu^{2}, \ \ g(\varphi Z, \varphi Z)=\lambda^{2}_i+\mu^{2}_j$$ and
$$\varphi^{2}Z=-\lambda_i\frac{\partial}{\partial x_i}-\mu_j\frac{\partial}{\partial y_j}=-Z$$ 
for any $i,j=1,...,5.$ It follows that $g(\varphi Z, \varphi Z)=g(Z,Z)-\eta^{2}(Z).$ Thus $(\varphi, \xi,\eta,g)$ is an
is an almost contact metric structure on $\mathbb{R}^{11}$. Thus we have

$$\varphi e_1=\frac{\partial}{\partial y_1}, \ \varphi e_2=\frac{\partial}{\partial x_1},\  \varphi e_3=\frac{\partial}{\partial y_2},\  \varphi e_4=-\frac{1}{\sqrt{2}}\frac{\partial}{\partial x_2}+\frac{1}{\sqrt{2}}\frac{\partial}{\partial y_3},$$
$$\varphi e_5=\frac{\partial}{\partial y_4}, \ \varphi e_6=-\cos\alpha\frac{\partial}{\partial x_4}+\sin\alpha\frac{\partial}{\partial y_5}, \ \varphi e_7=0.$$
By direct calculations, we obtain the distribution $\mathcal{D}=span\{e_1,e_2\}$ is an invariant distribution, the  distribution $\mathcal{D}_1=span\{e_3,e_4\}$ is a slant distribution with slant angle $\theta_1=\frac{\pi}{4}$ and the distribution $\mathcal{D}_2=span\{e_5,e_6\}$ is also a slant distribution with slant angle $\theta_2=\alpha, \ 0<\alpha<\frac{\pi}{2}.$ Thus $M$ is a $7-$dimensional proper quasi bi-slant submanifold of $\mathbb{R}^{11}$ with its usual almost contact metric structure.	
\end{example}

\end{document}